\newtheorem{theoremIntro}{Theorem}[]
\newtheorem{questionIntro}{Question}
\newtheorem{theorem}{Theorem}[section]
\newtheorem{lemma}[theorem]{Lemma}
\newtheorem{proposition}[theorem]{Proposition}
\newtheorem{definition}[theorem]{Definition}
\newtheorem{corollary}[theorem]{Corollary}
\theoremstyle{remark}
\newtheorem{remark}[theorem]{Remark}
\newcommand{\calG}{\ensuremath{\mathcal{G}}}
\newcommand{\calS}{\ensuremath{\mathcal{S}}}
\newcommand{\calA}{\ensuremath{\mathcal{A}}}
\newcommand{\calB}{\ensuremath{\mathcal{B}}}
\newcommand{\xra}{\xrightarrow}
\newcommand{\gln}{\ensuremath{\operatorname{GL}}}
\newcommand{\mo}{{-1}}
\newcommand{\bbZ}{\ensuremath{\mathbb{Z}}}
\newcommand{\bbC}{\ensuremath{\mathbb{C}}}
\newcommand{\bbI}{\ensuremath{\mathbb{I}}}
\newcommand{\bbN}{\ensuremath{\mathbb{N}}}
\newcommand{\cyc}{\ensuremath{\mathrm{cyc}}}
\newcommand{\ab}{\ensuremath{\mathrm{ab}}}
\begin{document}

\title{On additive co-minimal pairs}
 
\author{Arindam Biswas}
\address{Universit\"at Wien, Fakult\"at f\"ur Mathematik, Oskar-Morgenstern-Platz 1, 1090 Wien, Austria}
\curraddr{Department of Mathematics, Technion - Israel Institute of Technology,
Haifa 32000,
Israel}
\email{biswas@campus.technion.ac.il}
\thanks{}

\author{Jyoti Prakash Saha}
\address{Department of Mathematics, Indian Institute of Science Education and Research Bhopal, Bhopal Bypass Road, Bhauri, Bhopal 462066, Madhya Pradesh,
India}
\curraddr{}
\email{jpsaha@iiserb.ac.in}
\thanks{}

\subjclass[2010]{11B13, 05E15, 05B10, 11P70}

\keywords{Additive complements, minimal complements, sumsets, representation of integers, additive number theory}

\begin{abstract}
A pair of non-empty subsets $(W,W')$ in an abelian group $G$ is an additive complement pair if $W+W'=G$. $W'$ is said to be minimal to $W$ if $W+(W'\setminus \{w'\}) \neq G, \forall \,w'\in W'$. In general, given an arbitrary subset in a group, the existence of minimal complement(s) depends on its structure. The dual problem asks that given such a set, if it is a minimal complement to some subset. Additive complements have been studied in the context of representations of integers since the time of Erd\H{o}s, Hanani, Lorentz and others. The notion of minimal complements is due to Nathanson. We study tightness property of complement pairs $(W,W')$ such that both $W$ and $W'$ are minimal to each other. These are termed co-minimal pairs and we show that any non-empty finite set in an arbitrary free abelian group belongs to some co-minimal pair. We also study infinite sets forming co-minimal pairs. At the other extreme, motivated by unbounded arithmetic progressions in the integers, we look at sets which can never be a part of any minimal pair. This leads to a discussion on co-minimality, subgroups, approximate subgroups and asymptotic approximate subgroups of $G$.
\end{abstract}

\maketitle

\section{Introduction}

Let $(G, \cdot)$ be a group and let $A,B$ be non-empty subsets of $G$ with $A\cdot B = G$. Then the set $A$ is said to be a \textit{left complement} of $B$ in $G$ (respectively, $B$ is a \textit{right complement} of $A$ in $G$) and the pair $(A,B)$ is said to be a \textit{complement pair} in $G$. A left (resp. right) complement $A$ of some non-empty subset $B$ of $G$ is said to be \textit{minimal} if $A\cdot B=G$ (respectively $B\cdot A=G$) and $(A\setminus\lbrace a\rbrace)\cdot B\neq G$ (respectively $B\cdot (A\setminus \lbrace a \rbrace)\neq G$) for each $a\in A$. A complement pair in which at least one subset is minimal will be called a \textit{minimal pair}.

In the case of abelian groups, a left complement of a subset is also a right complement to that subset and vice versa and these are also known as additive complements.  Also, any non-empty subset $A$ is always a part of some complement pair (for instance, consider the pair $(A, G)$). 

Additive complements have been studied since a long time in the context of representations of the integers e.g., they appear in the works of Erd\H{o}s, Hanani, Lorentz and others. See \cite{Lorentz54}, \cite{Erdos54}, \cite{ErdosSomeUnsolved57} etc. A famous conjecture of Erd\H{o}s--Turan \cite{ET41} on additive bases states that given any (asymptotic) additive base $\beta$ of the natural numbers, of order $h$\footnote{A subset $\beta$ is called an (asymptotic) additive basis of order $h$ if there is some positive integer $h$ such that every sufficiently large positive integer $n$ can be written as the sum of at most $h$ elements of $\beta$.}, the number of representations of a positive integer $n$, as a function of $n$, must tend to $\infty$. Another important direction of research in this area is the study of sum-free sets, e.g., \cite{GR05}, \cite{Tra18} etc. The notion of minimal additive complements is due to Nathanson, who introduced it in \cite{Nat11} in the course of his study of natural arithmetic analogues of the metric concept of nets in the setting of the integers and also groups in general.  Henceforth, by a complement we shall mean an additive complement. If we need to use a set-theoretic complement, we shall explicitly state it.

The situation becomes interesting when we ask whether a given subset admits a minimal complement or not (posed by Nathanson in \cite{Nat11}), and also the dual question whether a given subset could be a minimal complement to some subset in a group or not. In this article, our aim is to study these questions. 

\subsection{Statement of results} We show that,
\begin{theoremIntro}
\label{Thm:FiniteSubsetIsAMinComp}
Any non-empty finite subset of a free abelian group (not necessarily of finite rank) is a minimal complement to some subset. 
\end{theoremIntro}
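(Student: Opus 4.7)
The plan is to reduce to free abelian groups of finite rank and then to build a minimal complement there using a coset-based recipe; the real work lies in the minimality verification.

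\textbf{Reduction to finite rank.} Since $W$ is finite, it lies in a finitely generated subgroup $H_0$ of $G$. Replacing $H_0$ by its pure closure in $G$ (the set of $g \in G$ with $n g \in H_0$ for some $n \geq 1$) gives a finitely generated direct summand $H \cong \bbZ^n$ of $G$ still containing $W$, so I may write $G = H \oplus K$. If $W'_H \subseteq H$ is a minimal complement of $W$ in $H$, then the candidate $W' := W'_H + K$ in $G$ satisfies $W + W' = W + W'_H + K = H + K = G$, while $(W \setminus \{w\}) + W' = G$ if and only if $(W \setminus \{w\}) + W'_H = H$. So the problem reduces to constructing $W'_H$ when $G = \bbZ^n$.

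\textbf{Construction in $\bbZ^n$.} The plan is to fix a sublattice $L \leq \bbZ^n$ of finite index for which the reduction $W \to \bbZ^n/L$ is injective, pick a set of coset representatives $R \supseteq W$, and set $S := R \setminus W$ for the cosets not hit by $W$. A first attempt is
\[
W'_H := L \,\cup\, (S - w_0 + L)
\]
for a fixed $w_0 \in W$. Coverage is clear: $W + L$ realises the cosets represented by $W$, and $w_0 + (S - w_0 + L) = S + L$ realises the remaining cosets. For minimality, I would then need, for each $w_i \in W$, an element of $\bbZ^n$ not covered by $(W \setminus \{w_i\}) + W'_H$.

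\textbf{The main obstacle.} This naive recipe is not always minimal: the removed coset $w_i + L$ can be re-covered by a cross term $w_j + s - w_0 + L$ whenever $w_i - w_j + w_0 \pmod{L}$ lies in $S$, and small cases (such as $W = \{0,1,3\} \subseteq \bbZ$) show that this obstruction persists for every sublattice $L$ with $W$ injecting modulo $L$, as long as one insists on a single common shift $w_0$. The proposed refinement is to allow different shifts $w_s \in W$ for different $s \in S$, chosen generically to break as many of these coincidences as possible, and for any remaining problematic $s$ to replace the full coset $s - w_s + L$ by a proper sub-coset $s - w_s + L'$ with $L' \leq L$ a deeper sublattice, patching the resulting gaps by finitely many extra ``witness'' elements assigned one to each $w_i \in W$. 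Verifying that these choices can be organised compatibly---so that every $w_i$ retains at least one element of $W'_H$ isolated from the others under the translations by $w_i - w_j$---is the technical core of the argument and the step I expect to be the main obstacle.
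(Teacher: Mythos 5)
Your reduction to finite rank is correct (a finite subset of a free abelian group lies in a finitely generated direct summand $H\cong \bbZ^n$, and a minimal complement $W'_H$ in $H$ inflates to the minimal complement $W'_H+K$ in $G=H\oplus K$). But the heart of the theorem --- actually producing a minimal complement of an arbitrary finite set inside $\bbZ^n$ --- is not proved. You set up the coset construction $L\cup(S-w_0+L)$, correctly observe that it fails because removed cosets get re-covered by cross terms (your $W=\{0,1,3\}$ example is a genuine obstruction), and then describe a repair strategy --- varying shifts $w_s$, passing to deeper sublattices $L'$, adding witness elements --- whose feasibility you explicitly defer as ``the technical core of the argument and the step I expect to be the main obstacle.'' As written there is no argument that these choices can always be organised compatibly, and the difficulty is not cosmetic: already for $n=1$ the statement that every nonempty finite subset of $\bbZ$ is a minimal complement to some set is a theorem with a nontrivial proof (Kwon, \cite[Theorem 9]{Kwon19}), so one should not expect a generic-choice patching argument to close the gap without substantial additional work.

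For comparison, the paper avoids any direct construction in $\bbZ^n$. Proposition \ref{Prop:ImageLarge} produces an automorphism $\varphi$ of $\bbZ^n$ after which the projection $\pi_1$ onto the first coordinate is injective on $\varphi(S)$; Kwon's theorem then supplies a subset $W\subseteq\bbZ$ to which the finite set $\pi_1(\varphi(S))\subseteq\bbZ$ is a minimal complement; and Lemma \ref{Lemma:MinCompProj} shows that $\varphi(S)$ is a minimal complement to $\pi_1^\mo(W)$, the injectivity of $\pi_1$ on $\varphi(S)$ being exactly what transfers minimality back upstairs. The infinite-rank case is handled by the same injective-projection device onto finitely many coordinates. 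So the entire difficulty is pushed into the one-dimensional case, which is quoted from the literature. To complete your proposal you would either need to adopt such a dimension reduction and cite (or reprove) the $\bbZ$ case, or genuinely carry out the patched lattice construction --- and that construction is where all the content of the theorem lives.
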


In fact, one can go beyond and study tightness property of a set and its minimal complement. For this, we introduce the notion of co-minimal pairs. 

\begin{definition}[Co-minimal pair]\label{DefnCoMinimal}
Let $A,B\subseteq G$ be two non-empty subsets. Then the pair $(A,B)$ is defined to be a \textnormal{co-minimal pair} if $A$ is a left minimal complement of $B$ and $B$ is a right minimal complement of $A$. 
\end{definition}

Thus, a co-minimal pair is a complement pair in which both the subsets are minimal. The existence of co-minimal pairs is a stronger notion to the existence of a minimal complement. In this context, we show that 
\begin{theoremIntro}
\label{Thm:CoMinimal}
Any non-empty finite subset of an arbitrary free abelian group is a part of some co-minimal pair. Moreover, if $S$ is a two element subset of a group $G$, then $(S, R), (L, S)$ are co-minimal pairs for some subsets $L, R$ of $G$.
\end{theoremIntro}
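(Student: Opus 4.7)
For the first assertion, the plan is to bootstrap from Theorem~\ref{Thm:FiniteSubsetIsAMinComp}: apply it to $W$ to obtain $B_0\subseteq G$ such that $W+B_0=G$ and $(W\setminus\{w\})+B_0\neq G$ for every $w\in W$. This last condition is monotone, passing to every subset $B\subseteq B_0$ with $W+B=G$, so it remains to shrink $B_0$ to a subset $B$ which is \emph{itself} minimal as a complement of $W$. I would well-order $B_0=\{b_\alpha\}_{\alpha<\kappa}$ and define a transfinite descending sequence $\{B_\alpha\}$ greedily: at a successor stage, set $B_{\alpha+1}=B_\alpha\setminus\{b_\alpha\}$ if doing so preserves $W+B=G$, otherwise $B_{\alpha+1}=B_\alpha$; at a limit $\lambda$, set $B_\lambda=\bigcap_{\alpha<\lambda}B_\alpha$; and put $B=B_\kappa$.

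The crucial step is that $W+B_\alpha=G$ persists through limit stages. Fix $g\in G$ and a limit ordinal $\lambda$, and assume for contradiction that every $g-w$ with $w\in W$ and $g-w\in B_0$ has been removed by some stage $\beta_w<\lambda$. The finiteness of $W$ gives a maximum $\beta^\star:=\max_{w}\beta_w$, attained at some $w^\star$. The removal of $g-w^\star$ at stage $\beta^\star$ required $W+(B_{\beta^\star}\setminus\{g-w^\star\})=G$, so $g$ had to be covered from this set, forcing some $g-w'$ with $w'\neq w^\star$ to remain in $B_{\beta^\star+1}$; but then $\beta_{w'}>\beta^\star$, a contradiction. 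Hence $g\in W+B_\lambda$, and the transfinite induction terminates with $W+B=G$. Each retained $b_\alpha\in B$ carries a witness $g_\alpha\notin W+(B_\alpha\setminus\{b_\alpha\})$ from the moment it was spared, and $B\setminus\{b_\alpha\}\subseteq B_\alpha\setminus\{b_\alpha\}$ implies the same $g_\alpha$ shows $(B\setminus\{b_\alpha\})+W\neq G$. Thus $B$ is itself minimal and $(W,B)$ is a co-minimal pair.

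For the two-element assertion I would give an explicit construction, since Theorem~\ref{Thm:FiniteSubsetIsAMinComp} does not apply to a general group $G$. For the pair $(S,R)$ with $S=\{a,b\}$, left-translate to reduce to the case $S=\{e,t\}$ with $t=a^{-1}b\neq e$; then $S\cdot R=G$ is equivalent to $R\cup tR=G$. Let $H=\langle t\rangle$ act on $G$ by left multiplication, so that the orbits are the right cosets $Hg$, each invariant under this action. On each orbit pick $R\cap Hg$ to consist of every other power of $t$ times $g$: the even powers $\{t^{2k}g\}$ when $|t|$ is infinite or even, and $\{g,t^2g,\ldots,t^{n-1}g\}$ when $|t|=n$ is odd (so that $n-1$ is even). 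A coset-by-coset verification gives $R\cup tR=G$, shows that removing any $r\in R$ leaves an element of its orbit uncovered (so $R$ is minimal), and yields $R\neq G$; the last of these makes $S$ automatically minimal, since $(S\setminus\{s\})\cdot R$ is either $R$ or $tR$, both proper subsets of $G$. A symmetric construction using right multiplication by $ba^{-1}$ and the left cosets of $\langle ba^{-1}\rangle$ produces $L$ making $(L,S)$ a co-minimal pair. The main obstacle of the whole argument is the limit-stage preservation in the transfinite pruning of Part~1, which depends essentially on $W$ being finite and relies on the pigeonhole/maximality argument on removal times; the case analysis on the order of $t$ in Part~2 is routine but needs the small adjustment in the odd-order case.
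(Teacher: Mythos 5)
Your argument is correct, and it splits naturally into a part that mirrors the paper and a part that does not. For the first assertion the paper does exactly what you do in spirit: it applies Theorem \ref{Thm:FiniteSubsetIsAMinComp} to get a set $B_0$ to which the finite set is a minimal complement, and then shrinks $B_0$ to a minimal complement $B\subseteq B_0$, using the same monotonicity observation that minimality of the finite side persists under shrinking the other side. The only difference is that the paper obtains the shrinking by citing Theorem \ref{finMin} (i.e.\ \cite[Theorem 2.1]{MinComp1}: every complement of a finite set contains a minimal complement), whereas you re-prove that fact from scratch by a transfinite pruning; your limit-stage argument via the maximal removal time is sound and is essentially the proof of that cited result, so you gain self-containedness at the cost of redoing known work. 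For the two-element assertion the routes genuinely diverge: the paper exhibits $G\setminus\{g^{-1}h\}$ as a set to which $S=\{g,h\}$ is a minimal left complement and then again invokes \cite[Theorem 2.1]{MinComp1} to extract a minimal right complement $R$ inside it (minimality of $S$ persisting by monotonicity), while you build $R$ explicitly, orbit by orbit along the right cosets of $\langle t\rangle$, taking every other power of $t$ with the parity adjustment in the odd-order case. I checked the odd case: with exponent set $\{0,2,\dots,n-1\}$ the element $g$ is covered twice, but removing $g$ uncovers $tg$, and removing any other $t^{2k}g$ uncovers $t^{2k}g$ itself, so minimality does hold. Your construction is more explicit and avoids the auxiliary extraction theorem entirely, at the price of a small case analysis; the paper's route is shorter but non-constructive about what $R$ actually is.
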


It should be emphasised that the dual question of whether a given subset is a minimal complement to some subset is often harder to answer than the existence of minimal complements. For instance, in the case of a finite group $G$, it is easy to see that any subset $A$ has a minimal complement. However, it is not clear (and in fact false in general), whether it is a minimal complement to some subset. This is made precise in the following theorem which states that the large subsets of a group cannot be a minimal complement to some subset and in particular cannot belong to a co-minimal pair.

\begin{theoremIntro}
\label{Thm:NecessaryCond}
Let $G$ be a group containing at least $3k+1$ elements for some integer $k\geq 1$. Then no subset of $G$ of size $|G|-k$ (i.e. having exactly $k$ elements in its set theoretic complement in $G$) can be a minimal complement to some subset in $G$. Consequently, if $X$ is a proper subset of a finite group $G$ such that $X$ is a part of a co-minimal pair, then 
$$
|X| 
< 
\frac 23 |G| + \frac 13
$$
holds. 
\end{theoremIntro}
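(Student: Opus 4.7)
The plan is to assume for contradiction that some $A\subseteq G$ with $|A|=|G|-k$ is a minimal complement to a subset $B\subseteq G$, and then use a double-counting argument on the representation function to trap $|B|$ in an impossible range.

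The first ingredient is a structural consequence of minimality. For each $a\in A$ there must exist some $g_a\in G$ admitting a unique representation $g_a = a\cdot b_a$ in $A\cdot B$; otherwise, every $g\in A\cdot B$ would be hit by $(A\setminus\{a\})\cdot B$ and $A$ would not be minimal. I would then check that the assignment $a\mapsto g_a$ is injective, since $g_a = g_{a'}$ for $a\neq a'$ would produce two distinct representations of the same element, contradicting uniqueness. This yields $|A| = |G|-k$ distinct elements of $G$, each witnessed exactly once by a pair in $A\times B$.

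The second ingredient is the count itself. Setting $r(g) = |\{(a,b)\in A\times B : ab = g\}|$, one has the pointwise bound $r(g)\leq |B|$ (for each $b\in B$, at most one $a\in G$ satisfies $ab=g$, namely $a = gb^{-1}$). Splitting $G$ into the $|A|$ elements $\{g_a\}$, which each contribute exactly $1$, and the remaining $k = |G| - |A|$ elements, which contribute at most $|B|$ each, gives
\[
|A|\cdot|B| \;=\; \sum_{g\in G} r(g) \;\leq\; |A| + k|B|.
\]
Provided $|B|\geq 2$ this rearranges to $|A|\leq k|B|/(|B|-1)$. The degenerate case $|B|=1$ is handled separately: it would force $A\cdot B = A \neq G$, a contradiction.

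Finally, I would substitute the hypothesis $|A| = |G|-k \geq 2k+1$ into the above inequality; the arithmetic collapses to $|B|(k+1) \leq 2k+1$, forcing $|B|<2$ and hence $|B|=1$, which has just been ruled out. The consequence for co-minimal pairs then follows by contrapositive: if the proper subset $X$ of a finite group $G$ lies in a co-minimal pair, then $X$ is a minimal complement, and writing $k := |G|-|X|\geq 1$ the theorem forbids $|G|\geq 3k+1$; rearranging $|G|\leq 3k$ yields $|X| < \tfrac{2}{3}|G|+\tfrac{1}{3}$. The main obstacle I expect is locating the correct combinatorial identity, namely the interplay between the upper bound $r(g)\leq |B|$ and the observation that minimality forces $|A|$ distinct elements to saturate $r=1$; once that pairing is spotted, the rest is book-keeping.
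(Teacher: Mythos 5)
Your argument is correct for finite $G$ and takes a genuinely different route from the paper's. The paper proceeds by explicit perturbation: writing $A = G\setminus\{g_1,\dots,g_k\}$ and translating $S$ so that $e\in S$, the fact that $A\cdot e$ misses each $g_i$ yields witnesses $g_i = h_i s_i$ with $h_i\in A$, $s_i\in S\setminus\{e\}$, and the bound $|G|\geq 3k+1$ then supplies one further element $g$ outside $\{g_1,\dots,g_k\}\cup(\{g_1,\dots,g_k\}\cdot s)\cup\{h_1,\dots,h_k\}$ for which $G\setminus\{g,g_1,\dots,g_k\}$ is still a complement to $S$, contradicting minimality. Your double count of the representation function shares the same seed (minimality forces, for each $a\in A$, an element of $G$ represented only through $a$, and these elements are pairwise distinct), but converts it into a global inequality rather than a local perturbation; the resulting arithmetic $(k+1)|B|\leq 2k+1$, together with the exclusion of $|B|=1$, closes the finite case cleanly.

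The one genuine issue is scope: the theorem as stated, and as invoked in Corollary \ref{Cor:NecessaryCond}, also covers infinite $G$ (a cofinite $A$ with exactly $k$ elements in its set-theoretic complement), and there the identity $\sum_{g\in G} r(g) = |A|\cdot|B|$ and the subsequent rearrangement lose all content -- every quantity is an infinite cardinal and no contradiction results. The fix is to localize your count. You have already shown that at most $k$ elements of $G$ admit more than one representation. Now take two distinct $b, b'\in B$ (possible since $|B|=1$ is excluded, as you note): every element of $Ab\cap Ab'$ has at least two representations, so $|Ab\cap Ab'|\leq k$; on the other hand $G\setminus(Ab\cap Ab')\subseteq (G\setminus Ab)\cup(G\setminus Ab')$ has at most $2k$ elements, so $G$ has at most $3k$ elements, a contradiction. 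This version dispenses with the global sum and works uniformly for finite and infinite $G$, so with that substitution your approach proves the full statement.
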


Thus, for large, finite subsets, the property of being a minimal complement does not hold inside finite groups in contrast with the situation in the free abelian groups (and in particular $\mathbb{Z}$) cf. Theorem \ref{Thm:FiniteSubsetIsAMinComp}. The above Theorem \ref{Thm:NecessaryCond} also has consequences in the case of infinite groups. See Corollary \ref{Cor:NecessaryCond}.

In section \ref{sec3}, we study several properties of co-minimal pairs. Next, in section \ref{sec4} we turn our attention to infinite subsets $A,B\subseteq G$ forming co-minimal pairs. For this, the notion of spiked subsets (see Definition \ref{Spiked subsets} and also section \ref{sec4}) is useful. If $G_1, G_2$ are subgroups of an abelian group $G$ such that the multiplication map $G_1 \times G_2 \to G$ defined by $(g_1, g_2)\mapsto g_1g_2$ is an isomorphism, then it turns out that the subsets of $G$ of the form $B\times G_2$ with $B\subseteq G_1$ is a part of a co-minimal pair in $G$ if and only if $B$ is a part of co-minimal pair in $G_1$ (see Lemma \ref{Lemma:Spike}). More generally, an appropriate analogue of this statement also holds for spiked subsets.

\begin{theoremIntro}[Theorem \ref{Thm:Spiked}]
Let $G_1, G_2$ denote two subgroups of an abelian group $\calG$. Let $X$ be a $(u, \varphi)$-bounded spiked subset of $\calG$ with respect to $G_1, G_2$ and with base $\calB$. If $u$ admits a $\varphi$-moderation, then $X$ is a part of a co-minimal pair in $G_1G_2$ of the form $(X, M_v)$ where $M_v$ is the graph of the restriction of a moderation $v$ of $u$ to some subset $M$ of $G_1$ if and only if $X$ is equal to $\calB G_2$ and $\calB$ is a part of a co-minimal pair in $G_1$. 
\end{theoremIntro}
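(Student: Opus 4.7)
The proof naturally splits along the biconditional, and I would handle the two directions separately.

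For the \emph{if} direction, assume $X = \calB G_2$ and $\calB$ is part of a co-minimal pair $(\calB, M)$ in $G_1$. Using the hypothesis that $u$ admits a $\varphi$-moderation, fix such a moderation $v \colon G_1 \to G_2$ and set $M_v := \{(m, v(m)) : m \in M\}$. Since $X = \calB G_2$, a direct computation gives
\[
X \cdot M_v = \bigcup_{b \in \calB,\, m \in M} \{bm\} \times G_2 = (\calB M) \times G_2 = G_1 G_2,
\]
so $(X, M_v)$ is a complement pair. Minimality of $M_v$ follows because removing $(m_0, v(m_0))$ restricts the first-coordinate projection of $X \cdot (M_v \setminus \{(m_0, v(m_0))\})$ to $\calB(M \setminus \{m_0\}) \subsetneq G_1$. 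For minimality of $X$, pick $(b_0, h_0) \in X$; minimality of $\calB$ relative to $M$ supplies $m^* \in M$ with $g^* := b_0 m^* \notin (\calB \setminus \{b_0\}) M$, and then $(g^*, h_0 v(m^*))$ cannot be written in $(X \setminus \{(b_0, h_0)\}) \cdot M_v$, since any factorization $(g^*, k) = (bm, g v(m))$ with $bm = g^*$ forces $b = b_0$, $m = m^*$, and hence $g = h_0$.

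For the \emph{only if} direction, suppose $(X, M_v)$ is a co-minimal pair where $M_v$ is the graph of $v|_M$ for some moderation $v$ of $u$ and some $M \subseteq G_1$. Projecting the identity $X \cdot M_v = G_1 G_2$ to $G_1$ yields $\calB M = G_1$. The core of the argument is then to upgrade this to $X = \calB G_2$, i.e., to force $u(b) = G_2$ for every $b \in \calB$. The coverage of the $G_2$-fiber above $g \in G_1$ by $X \cdot M_v$ is realized only through the finitely-parameterized translates $\{u(b) v(m) : bm = g,\, b \in \calB,\, m \in M\}$, and the $\varphi$-moderation condition pins down the single-valued selection $v$ rigidly. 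I would show that any hypothetical $b_0 \in \calB$ with $u(b_0) \subsetneq G_2$ forces an element of some $G_2$-fiber to remain uncovered, contradicting the complement property. Once $X = \calB G_2$ is established, Lemma \ref{Lemma:Spike} directly yields that $\calB$ is part of a co-minimal pair in $G_1$, completing the only-if direction.

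The main obstacle is the implication $u(b_0) \subsetneq G_2 \Rightarrow$ contradiction in the only-if direction. This step is where the precise definitions of $(u, \varphi)$-bounded spiked subset and of $\varphi$-moderation must be fully unpacked in order to tie the local structure of the spike at $b_0$ to the global selection function $v$ on all of $G_1$, and where the two-sided minimality of $(X, M_v)$ must be leveraged simultaneously to rule out any proper spike. The remaining steps reduce to the spike-free case handled by Lemma \ref{Lemma:Spike}, and the if direction is a direct verification.
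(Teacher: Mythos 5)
Your \emph{if} direction is correct and is essentially the paper's argument: with $X=\calB G_2$ and $(\calB,M)$ co-minimal, the complement property, the minimality of $M_v$ (by projecting to $G_1$), and the minimality of $X$ (by lifting an element $b_0m^*\notin(\calB\setminus\{b_0\})M$ via $v$) are exactly the checks the paper makes, the only difference being that the paper outsources the minimality of $M_v$ over $\calB G_2$ to \cite[Theorem 5.6]{MinComp2} rather than verifying it by hand.

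The \emph{only if} direction, however, contains a genuine gap, and the strategy you sketch for the step you yourself flag as the ``main obstacle'' would fail. First, the setup is confused: $u$ is a function $G_1\to G_2$, so $u(b_0)$ is an \emph{element} of $G_2$, not a subset, and the spikes of $X$ sit over points of $G_1\setminus\calB$ (over $\calB$ the fiber is all of $G_2$ by definition); what must be excluded is a nonempty spike over some $g_1\in G_1\setminus\calB$. More seriously, the proposed contradiction --- that a proper spike ``forces an element of some $G_2$-fiber to remain uncovered'' --- runs in the wrong direction: enlarging $X$ beyond $\calB G_2$ can only improve coverage and can never violate $X\cdot M_v=G_1G_2$. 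The correct contradiction is with the \emph{minimality of $X$} as a complement to $M_v$. The paper's route is: (i) show $M$ is a minimal complement to $\calB$ in $G_1$ --- this is where the $\varphi$-moderation is genuinely used, since a fiber $g_1G_2$ with $g_1\notin\calB\cdot M$ would receive only the bounded-above contributions $u(x)v(g_1x^{-1})$ from spikes and so could not be exhausted; (ii) invoke \cite[Theorem 5.6]{MinComp2} to conclude that $\calB G_2$ by itself is already a complement to $M_v$; (iii) since $\calB G_2\subseteq X$ and $X$ is a minimal complement to $M_v$, conclude $X=\calB G_2$. Your assertion that projecting $X\cdot M_v=G_1G_2$ yields $\calB\cdot M=G_1$ has the same defect, since $\pi_1(X)$ may strictly contain $\calB$; one again needs the moderation/boundedness argument of step (i). Once these points are supplied, your appeal to Lemma \ref{Lemma:Spike} for the final conclusion is fine, but as written the only-if direction is not proved.
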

 Roughly speaking, the above theorem classifies all the spiked subsets which can be a part of a co-minimal pair of certain form.

On the other hand, if we take $A=\mathbb{N}^{d}$ in $G=\mathbb{Z}^{d}$, then $A$ can never have a minimal complement and is also not a minimal complement to any set in $G$. Thus it is in a sense the other extreme to being a part of a co-minimal pair. This observation can be generalised to the following result.

\begin{theoremIntro}
No unbounded, generalised arithmetic progressions, in free abelian groups can belong to a minimal pair.
\end{theoremIntro}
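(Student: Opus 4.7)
My plan is to show that every unbounded generalised arithmetic progression $P = \{b_0 + \sum_{i=1}^d n_i a_i : 0 \le n_i < N_i\}$ in a free abelian group $G$, with $S := \{i : N_i = \infty\}$ nonempty, neither admits a minimal complement nor is itself a minimal complement to any set, and hence cannot belong to any minimal pair. The structural input I will exploit is that $P + g \subseteq P$ for every $g$ in the submonoid $M := \bbN\{a_i : i \in S\}$ generated by the unbounded directions; equivalently, writing $P = b_0 + B$ with box $B := \sum_{i=1}^d [0, N_i) a_i$, one has $g + B \subseteq B$ for every $g \in M$. I will work under the hypothesis that the GAP is presented so that $M$ is pointed, $M \cap (-M) = \{0\}$, which is automatic for proper GAPs (linearly independent $a_i$).

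The uniform contradiction I will aim for is the following. Given any minimal pair $(A, P)$, I will produce an $\alpha_0 \in A$ and some $g \in M \setminus \{0\}$ such that the point $y := \alpha_0 + b_0 - g$ has no representation $y = \alpha + p$ in $A + P$, contradicting $A + P = G$. Any such representation would give $p - b_0 = (\alpha_0 - \alpha) - g \in B$; since $g + B \subseteq B$, this forces $\alpha_0 - \alpha \in B \subseteq M$. Once I can argue $\alpha = \alpha_0$, the remaining constraint reads $-g \in B \subseteq M$; combined with $g \in M$ and pointedness, $g = 0$, a contradiction.

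The step ``$\alpha = \alpha_0$'' is where the minimal-pair hypothesis enters. If $A$ is a minimal complement of $P$, then each $a \in A$ has a unique witness $w_a = a + p_a$ in $A \times P$; for any $g' \in M \setminus \{0\}$, the rewrite $w_a = (a - g') + (p_a + g')$ would violate uniqueness (since $p_a + g' \in P$), so $a - g' \notin A$. This makes $A$ an antichain for the partial order $x \le_M y \Leftrightarrow y - x \in M$, and $\alpha_0 - \alpha \in M$ then upgrades to $\alpha = \alpha_0$ for every choice of $\alpha_0 \in A$. If instead $P$ is a minimal complement of $A$, then $b_0 \in P$ is essential, so some $y_0 = \alpha_0 + b_0$ admits no other representation in $A + P$; equivalently $\alpha_0 - \alpha' \notin B$ for all $\alpha' \in A \setminus \{\alpha_0\}$, which again yields the desired ``$\alpha = \alpha_0$'' at this specific $\alpha_0$.

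The main obstacle is the pointedness of $M$. If the unbounded generators cancel — for instance, $a_1 = -a_2$ with $N_1 = N_2 = \infty$ — then $M$ is a nontrivial subgroup, $P$ contains a full coset of it, and $P$ may genuinely belong to a minimal pair (coset representatives of a subgroup form a minimal complement). Accordingly, the theorem must be read as applying to GAPs admitting a proper presentation, or one must first reduce to such a presentation. The natural reduction is to factor out the subgroup $H := M \cap (-M)$: the quotient GAP in $G/H$ has pointed unbounded monoid, and one argues that any minimal pair containing $P$ would descend to a minimal pair containing the quotient GAP, which is excluded by the argument above. Handling this reduction carefully is the only delicate point.
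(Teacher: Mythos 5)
Your construction of the missing point $y=\alpha_0+b_0-g$ works cleanly in the case where $P$ itself is the minimal member of the pair: essentiality of $b_0$ supplies an $\alpha_0$ with $\alpha_0-\alpha'\notin B$ for every other $\alpha'\in A$, and together with $g+B\subseteq B$ and $-g\notin B$ (the latter from linear independence of the $a_i$) this shows $y\notin A+P$, contradicting $A+P=G$. That is a correct, if somewhat roundabout, substitute for the paper's one-line argument that $P+b_1'b_1$ is a proper subset of $P$ which is still a complement of $A$, so $P$ is never minimal. Your insistence on pointedness of $M$ is also right and matches the paper, which builds it into the hypothesis that $b_1,\dots,b_d$ generate a free subgroup of rank $d$.

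The proof breaks in the other case, where $A$ is a minimal complement of $P$, i.e.\ where you must show the GAP admits no minimal complement. You correctly derive that $A$ is an antichain for $\le_M$, but the step ``$\alpha_0-\alpha\in B\subseteq M$'' uses a false inclusion: $B=\sum_i[0,N_i)a_i$ contains the contributions $n_ja_j$ with $0<n_j<N_j$ from the bounded directions $j\notin S$, and these do not lie in $M=\bbN\{a_i: i\in S\}$. The antichain property only forbids $\alpha=\alpha_0-m$ with $m\in M\setminus\{0\}$, not $\alpha=\alpha_0-m-f$ with $f$ a nonzero element of the bounded part. Concretely, for $P=\bbN\times\{0,1\}$ in $\bbZ^2$ one has $M=\bbN e_1$, and a representation of $y$ using some $\alpha=\alpha_0-m-e_2\in A$ is not excluded, so no contradiction results. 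This mixed bounded/unbounded situation is precisely where the paper has to work: it projects onto the bounded coordinates, applies the antichain property fibrewise over the finitely many fibres, and takes a coordinatewise minimum to manufacture a point not in $A+P$. Your argument is complete only when all $N_i=\infty$ (then $B=M$, and it gives a clean self-contained proof of the $\bbN^d$ case, which the paper merely cites); to cover general unbounded GAPs you need an additional fibrewise or extremal step of this kind.
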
 

This is detailed in Theorem \ref{semilinear}. The above leads to a discussion on co-minimality and infinite approximate subgroups and asymptotically approximate subgroups. See section \ref{sec5}.
 
Finally, in the section on concluding remarks and further questions (see section \ref{sec6}), we look at self-complements\footnote{A non-empty set $A$ is a self-complement iff $A\cdot A = G$. If, in addition, $A$ is minimal, then it's called a minimal self-complement or a co-minimal pair $(A,A)$.} in arbitrary abelian groups and remark that a set $A$ is a minimal self-complement iff $A$ does not contain any non-trivial $3$ term arithmetic progression. We finish by stating several open questions and further directions of research. 

\section{Finite subsets and co-minimal pairs}\label{sec2}

We begin the section by noting that the set of all co-minimal pairs is a strict subset of the set of all minimal pairs.

\begin{lemma}
There exists a non-empty subset $A\subseteq \mathbb{Z}$ such that $A$ has a minimal complement, but $A$ is not a minimal complement to any set. Thus, $A$ can belong to a minimal pair, but can never belong to a co-minimal pair.
\end{lemma}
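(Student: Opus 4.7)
The plan is to take $A := \mathbb{Z}\setminus\{0\}$, a cofinite proper subset of $\mathbb{Z}$. Morally this set is far too large to be a minimal complement to anything (this is in the spirit of Theorem \ref{Thm:NecessaryCond}), yet the availability of two ``missing directions'' in $\mathbb{Z}$ still allows it to admit a minimal complement.

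For the first half, I would simply verify that $B := \{0, 1\}$ is minimal to $A$. Indeed $A + B = (\mathbb{Z}\setminus\{0\}) \cup (\mathbb{Z}\setminus\{1\}) = \mathbb{Z}$, while $A + \{0\} = \mathbb{Z}\setminus\{0\}$ and $A + \{1\} = \mathbb{Z}\setminus\{1\}$ are each proper; hence neither $0$ nor $1$ can be deleted, and $(A, B)$ is a minimal pair.

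For the second half, suppose toward contradiction that $A$ is a minimal complement to some $B \subseteq \mathbb{Z}$. The key tool is the standard ``unique witness'' principle: minimality of $A$ forces that, for each $a \in A$, there exists $n_a \in \mathbb{Z}$ with $n_a - a \in B$ and $n_a - a' \notin B$ for all $a' \in A\setminus\{a\}$; otherwise $A\setminus\{a\}$ would still be a complement. Using $A\setminus\{a\} = \mathbb{Z}\setminus\{0, a\}$, this collapses to
\[
B \;\subseteq\; \{n_a,\; n_a - a\},
\]
and choosing $a$ with $|a|$ large forces $|B| \leq 2$.

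It then remains to rule out $|B| \in \{1, 2\}$. A singleton $B = \{b\}$ fails because $A + B = \mathbb{Z}\setminus\{b\} \neq \mathbb{Z}$. A pair $B = \{b_1, b_2\}$ satisfying $\{b_1, b_2\} = \{n_a, n_a - a\}$ for every nonzero $a$ would force $b_1 - b_2 = \pm a$ for all $a \neq 0$, which is impossible. Hence $A$ is not a minimal complement to any set, and in particular cannot belong to a co-minimal pair. The only delicate step is the unique-witness principle; everything else is arithmetic bookkeeping.
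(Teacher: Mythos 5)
Your proof is correct, but it takes a genuinely different route from the paper. The paper works with $A = 2\mathbb{Z}\cup\{1\}$: the existence of a minimal complement is outsourced to the Chen--Yang theorem (since $\inf A = -\infty$ and $\sup A = +\infty$), and the failure of $A$ to be a minimal complement is shown by a parity argument (either $B$ is forced to equal $2\mathbb{Z}$, in which case $\{0,1\}\subseteq A$ already complements it, or $B$ meets both parity classes, in which case the proper subset $2\mathbb{Z}$ of $A$ already complements $B$). You instead take the cofinite set $A = \mathbb{Z}\setminus\{0\}$, verify by hand that $\{0,1\}$ is a minimal complement to $A$, and then use the unique-witness principle to force $|B|\leq 2$ and rule out both cases by direct arithmetic. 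Your argument is entirely self-contained --- no appeal to Chen--Yang --- which is a genuine advantage; on the other hand, your second half is really a special case of the paper's Corollary \ref{Cor:NecessaryCond} (a subset of an infinite group with finite set-theoretic complement is never a minimal complement to anything), so in the paper's own logical architecture your example would be dispatched in one line by citing that corollary rather than reproved from scratch; there is no circularity in doing so, since that corollary does not depend on this lemma. The only stylistic quibble is that your remark about ``choosing $a$ with $|a|$ large'' is unnecessary: the containment $B\subseteq\{n_a, n_a-a\}$ for any single nonzero $a$ already gives $|B|\leq 2$, and it is only in excluding the two-element case that you need to vary $a$.
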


\begin{proof}
	Consider the subset $A=2\mathbb{Z}\cup\lbrace 1\rbrace\subseteq \mathbb{Z}$. Then $$\inf A = -\infty \text{ and } \sup A = +\infty.$$ By a result of Chen--Yang \cite[Theorem 1]{CY12}, the set $A$ admits a minimal complement in $\bbZ$. However, $A$ itself cannot be a part of a co-minimal pair. For this, we show a stronger statement that $A$ cannot be a minimal complement to any set in $\mathbb{Z}$. Otherwise, suppose $A$ is a minimal complement to $B$ for some subset $B$ of $\bbZ$. If all the elements of $B$ have the same parity, then replacing $B$ by $B+1$ if necessary, we may assume that $B$ is a subset of $2\bbZ$ (see Lemma \ref{Lemma:CoMinimalTranslate} why this doesn't change the existence of minimal complements). Since $A+B$ is equal to $\bbZ$, it follows that $B$ is equal to $2\bbZ$. However, $2\bbZ\cup \{1\}$ is not a minimal complement to $2\bbZ$ since the subset $\{0,1\}$ of $2\bbZ\cup \{1\}$ is a complement to $2\bbZ$. Let us assume that $B$ contains two elements of different parity. Let $B_e$ (resp. $B_o$) denote the subset of $B$ consisting of the even (resp. odd) elements of $B$. Note that $2\bbZ + B_e$ and $2\bbZ + B_o$ are subsets of  $A+ B$ and 
	$$2\bbZ \subseteq  2\bbZ + B_e \text{ and } 1 + 2\bbZ \subseteq 2\bbZ + B_o.$$ Thus, $$\mathbb{Z} = 2\mathbb{Z}\cup ( 1 + 2\bbZ ) \subseteq  ( 2\bbZ + B_e ) \cup ( 2\bbZ + B_o ) = 2\mathbb{Z} + (B_e \cup B_o) .$$
      This implies that the subset $2\bbZ$ of $A$ is a complement to $B_e \cup B_o = B$. Hence $A$ is not a minimal complement to $B$. Consequently, $A$ cannot be a part of a co-minimal pair. 
\end{proof}
Further, co-minimal pairs are preserved under translations.

\begin{lemma}
\label{Lemma:CoMinimalTranslate}
If $(A, B)$ is a co-minimal pair in a group $G$, then $(g\cdot A, B\cdot h)$ forms a co-minimal pair for any two elements $g, h$ of $G$.
\end{lemma}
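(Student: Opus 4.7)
The plan is to verify the two defining conditions of a co-minimal pair for $(g\cdot A, B\cdot h)$ directly, using the fact that in any group $G$ the left and right translation maps $x\mapsto g\cdot x$ and $x\mapsto x\cdot h$ are bijections of $G$, and hence distribute over set-theoretic operations like removal of a single element.

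First I would check that $(g\cdot A, B\cdot h)$ is a complement pair: compute
$$(g\cdot A)\cdot(B\cdot h) \;=\; g\cdot(A\cdot B)\cdot h \;=\; g\cdot G\cdot h \;=\; G,$$
where the last equality uses that $g\cdot G = G\cdot h = G$ for any $g,h\in G$. Next, to verify the left minimality of $g\cdot A$ as a complement of $B\cdot h$, fix $a\in A$ and observe that since $x\mapsto g\cdot x$ is injective,
$$(g\cdot A)\setminus\{g\cdot a\} \;=\; g\cdot(A\setminus\{a\}).$$
Hence
$$\bigl((g\cdot A)\setminus\{g\cdot a\}\bigr)\cdot(B\cdot h) \;=\; g\cdot\bigl((A\setminus\{a\})\cdot B\bigr)\cdot h.$$
If this equalled $G$, then multiplying on the left by $g^{-1}$ and on the right by $h^{-1}$ would give $(A\setminus\{a\})\cdot B = G$, contradicting the minimality of $A$ as a left complement of $B$. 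A symmetric argument, using that $(B\cdot h)\setminus\{b\cdot h\} = (B\setminus\{b\})\cdot h$, establishes the right minimality of $B\cdot h$ as a complement of $g\cdot A$.

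There is essentially no obstacle here: the only thing to keep straight is the side on which each translation acts, and the fact that the notion of co-minimal pair already distinguishes left and right minimality (so the verification splits cleanly into the two symmetric checks above). The proof is a routine transport-of-structure along the bijection $G\to G$, $x\mapsto g\cdot x\cdot h$, and no additional hypothesis on $G$ (abelian or otherwise) is required.
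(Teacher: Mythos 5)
Your proof is correct and follows the same route as the paper, which simply observes that left multiplication by $g$ and right multiplication by $h$ are bijections of $G$; you have merely written out the details that the paper leaves implicit. No issues.
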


\begin{proof}
It follows since multiplication by an element $g$ from the left induces a bijection from $G$ to $G$ and multiplication by an element $h$ from the right also induces a bijection.
\end{proof}

Next, we will proceed to prove Theorems \ref{Thm:FiniteSubsetIsAMinComp}, \ref{Thm:CoMinimal}. 

\begin{proposition}
\label{Prop:ImageLarge}
Given a non-empty finite subset $S$ of $\bbZ^n$ for $n\geq 2$, there exists an automorphism $\varphi$ of the group $\bbZ^n$ such that the image of the set $\varphi(S)$ under the projection map $$\pi_1: \bbZ^n\to \bbZ \text{ (onto the first coordinate) }$$ contains exactly $\# S$ elements. 
\end{proposition}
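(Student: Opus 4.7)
The plan is to construct an explicit automorphism $\varphi$ of $\bbZ^n$ whose first coordinate is a linear form separating the points of $S$. Since $\pi_1 \circ \varphi$ is a $\bbZ$-linear functional, it is determined by the first row of the matrix of $\varphi$, and any primitive vector $(a_1, \dots, a_n)\in\bbZ^n$ (i.e.\ with $\gcd(a_1,\ldots,a_n)=1$) can be completed to a matrix in $\gln_n(\bbZ)$. So the task reduces to finding an integer row that takes distinct values on distinct elements of $S$.

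First, I would let $M = \max\{\,|s_i| : s\in S,\ 1\leq i\leq n\,\}$, which is a finite non-negative integer because $S$ is finite. For any two distinct elements $s,s'\in S$ each difference $s_i - s'_i$ lies in $[-2M, 2M]$. Then for any integer $N \geq 2M+1$ I would consider the linear form
$$\ell(x_1,\dots,x_n) \;=\; x_1 + Nx_2 + N^2x_3 + \cdots + N^{n-1}x_n,$$
which has $\gcd$ of coefficients equal to $1$. A convenient explicit completion is the upper triangular matrix with first row $(1, N, N^2,\ldots, N^{n-1})$ and the remaining rows equal to the standard basis vectors $e_2,\ldots,e_n$. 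This matrix has determinant $1$, hence defines an automorphism $\varphi$ of $\bbZ^n$ with $\pi_1\circ\varphi = \ell$.

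The key verification is that $\ell$ is injective on $S$. If $\ell(s) = \ell(s')$ for distinct $s,s'\in S$, let $k$ be the largest index with $s_k \neq s'_k$. The leading term satisfies $|N^{k-1}(s_k - s'_k)|\geq N^{k-1}$, while the lower-order terms contribute at most
$$\left|\sum_{i<k} N^{i-1}(s_i - s'_i)\right| \;\leq\; 2M\cdot\frac{N^{k-1}-1}{N-1} \;<\; N^{k-1},$$
the last inequality holding because $N-1\geq 2M$. This contradicts $\ell(s-s')=0$, so $\pi_1(\varphi(S))$ must have $\#S$ distinct elements.

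There is no real obstacle here: the proof amounts to the classical base-$N$ separation trick combined with the observation that a primitive row extends to a unimodular matrix (in fact, in our case trivially so, since the first entry is already $1$). One could also argue abstractly that the bad set of rows — those on which some pair of distinct elements of $S$ collapses — is a finite union of proper sublattices of $\bbZ^n$, which cannot exhaust the primitive vectors; but the explicit construction above is both shorter and directly provides the automorphism $\varphi$.
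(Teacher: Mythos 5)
Your proof is correct, but it takes a genuinely different route from the paper's. The paper argues by induction on $n$: in each step it applies a shear automorphism $\varphi_m$ (identity matrix with a column of $m$'s added) and observes that for each pair of distinct points of $S$ only finitely many values of $m$ can cause their projections to collide, so some $m$ works; the automorphism is then built up as a composition through the inductive chain $\bbZ^n \to \bbZ^{n-1} \to \cdots \to \bbZ$. You instead produce a single explicit unimodular matrix in one shot, with first row $(1, N, N^2, \dots, N^{n-1})$ for $N \geq 2M+1$, and verify injectivity of the resulting linear form on $S$ directly via the base-$N$ separation estimate; your inequality $2M\cdot\frac{N^{k-1}-1}{N-1} < N^{k-1}$ is correct, and the upper-triangular completion with determinant $1$ is indeed an automorphism. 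What your approach buys is constructiveness and an effective bound on the automorphism in terms of $\max_{s\in S}\|s\|_\infty$, with no induction needed; what the paper's approach buys is that it never needs to quantify the size of the coordinates of $S$ — it only uses that finitely many shear parameters are ``bad'' for each of the finitely many pairs. (One trivial remark: when $S$ is a singleton your bound allows $N=1$, where the displayed fraction degenerates, but the claim is vacuous there; taking $N\geq \max(2M+1,2)$ removes even this cosmetic issue.)
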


\begin{proof} We show it by induction. The base case is for $n=2$.
Suppose $S$ is a non-empty finite subset of $\bbZ^2$. Then for some positive integer $m$, the image of the set $\varphi_m(S)$ under the projection map
 $$\pi_1: \bbZ^2\to \bbZ \text{ (onto the first coordinate) }$$ contains $\# S$ elements where $\varphi_m$ denotes the automorphism of $\bbZ^2$ defined by 
$$\varphi_m:= 
\begin{pmatrix}
1 & m \\
0 & 1
\end{pmatrix}.
$$
Otherwise, there exist infinitely many positive integers $m_1 < m_2 < m_3 < \cdots$ and two distinct elements $s, t$ in $S$ such that 
\begin{equation}
\label{Eqn:pi1Eq}
\pi_1(\varphi_{m_i}(s)) = \pi_1(\varphi_{m_i}(t)) \text{ for any } i\geq 1.
\end{equation}
Let $s$ (resp. $t$) be equal to $(a_1, a_2)$ (resp. $(b_1, b_2)$). So any integer $i\geq 1$, we obtain 
$$a_1 + m_i a_2 = b_1 + m_i b_2.$$
Note that $a_2 \neq b_2$ (otherwise $a_1 = b_1$ and hence $s = t$). Thus the equality in Equation \eqref{Eqn:pi1Eq} cannot hold for infinitely many positive integers $m_i$. So the image of the set $\varphi_m(S)$ under the projection map 
$$\pi_1: \bbZ^2\to \bbZ \text{ (onto the first coordinate) }$$ 
contains $\# S$ elements for some positive integer $m$, i.e., the Proposition holds for $n=2$. 
 
Suppose the Proposition also holds for $n=r$ for some positive integer $r\geq 2$. Let $S$ be a non-empty finite subset of $\bbZ^{r+1}$. Then for some positive integer $m$, the image of the set $\varphi_m(S)$ under the projection map $$\pi: \bbZ^{r+1}\to \bbZ^r \text{ (onto the first $r$-coordinates) }$$ contains $\# S$ elements where $\varphi_m$ denotes the automorphism of $\bbZ^{r+1}$ defined by 
$$\varphi_m:= 
\begin{pmatrix}
1 & 0 & \cdots & 0 & m \\
0 & 1 & \cdots & 0 & m \\
\vdots & \vdots & \ddots & \vdots & \vdots \\
0 & 0 & \cdots & 1 & m\\
0 & 0 & \cdots & 0 & 1
\end{pmatrix}_{(r+1)\times (r+1)}.
$$
Otherwise, there exist infinitely many positive integers $m_1 < m_2 < m_3 < \cdots$ and two distinct elements $s, t$ in $S$ such that 
\begin{equation}
\label{Eqn:piEq}
\pi(\varphi_{m_i}(s)) = \pi(\varphi_{m_i}(t)) \text{ for any } i\geq 1.
\end{equation}
Let $s$ (resp. $t$) be equal to $(a_1, a_2, \cdots, a_{r+1})$ (resp. $(b_1, \cdots, b_{r+1})$). So any integer $i\geq 1$ and $1\leq \ell \leq r$, we obtain 
$$a_\ell + m_i a_{r+1} = b_\ell + m_i b_{r+1}.$$
Note that $a_{r+1} \neq b_{r+1}$ (otherwise $a_\ell = b_\ell$ for $1\leq \ell\leq r$ and hence $s = t$). Thus the above equality in Equation \eqref{Eqn:piEq} cannot hold for infinitely many positive integers $m_i$. So the image of the set $\varphi_m(S)$ under the projection map $$\pi: \bbZ^{r+1}\to \bbZ^r \text{ (onto the first $r$-coordinates) }$$ contains $\# S$ elements for some positive integer $m$.
By the induction hypothesis, there exists an element $A\in \gln_r(\bbZ)$ such that the image of $\pi(\varphi_m(S))$ under $A$ contains $\# \pi(\varphi_m(S)) = \# S$ elements. \\
Let $\widetilde A$ denote the automorphism of $\bbZ^{r+1} = \bbZ^r \times \bbZ$ which acts by $A$ on the first factor $\bbZ^r$ and acts trivially on the second factor $\bbZ$. Then the image of $(\widetilde A\circ \varphi_m)(S)$ under the projection map $$\pi: \bbZ^{r+1} \to \bbZ \text{ (onto the first coordinate) } $$contains exactly $\# S$ elements. Hence the Proposition follows. 
\end{proof}

\begin{lemma}
\label{Lemma:MinCompProj}
Let $S$ be a nonempty finite subset of an abelian group $G$. Suppose $G_1$ is a group, and $\pi:G\to G_1$ is a surjective group homomorphism such that $\pi(S)$ contains $\# S$ elements. Then $S$ is a minimal complement to some subset of $G$ if the subset $\pi(S)$ of $G_1$ is a minimal complement to some subset $W$ of $G_1$. 
\end{lemma}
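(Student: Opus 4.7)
The plan is to exhibit an explicit $B\subseteq G$ to which $S$ is a minimal complement, namely the full preimage $B:=\pi^{-1}(W)$. There will be two things to check: that $S+B=G$, and that removing any single element of $S$ destroys the complement property. Both follow by pulling apart the corresponding statements about $\pi(S)$ and $W$ along $\pi$.

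For the covering property, I would take an arbitrary $g\in G$; since $\pi(S)+W=G_1$ by hypothesis, we may write $\pi(g)=\pi(s)+w$ for some $s\in S$ and $w\in W$. Then $g-s$ maps to $w$, so $g-s\in\pi^{-1}(w)\subseteq B$, giving $g=s+(g-s)\in S+B$. This step uses only surjectivity of $\pi$ implicitly through the definition $B=\pi^{-1}(W)$ and the complement property for $\pi(S)$.

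For the minimality check I would fix $s\in S$ and aim to produce an element of $G$ not lying in $(S\setminus\{s\})+B$. Here the hypothesis $\#\pi(S)=\#S$ becomes crucial: it forces $\pi|_S$ to be a bijection onto $\pi(S)$, so that $\pi(s')\neq \pi(s)$ for every $s'\in S\setminus\{s\}$, i.e.\ $\pi(S\setminus\{s\})=\pi(S)\setminus\{\pi(s)\}$. By minimality of $\pi(S)$ as a complement to $W$ in $G_1$, there exists $h\in G_1$ which does not lie in $(\pi(S)\setminus\{\pi(s)\})+W$; pick any lift $g\in \pi^{-1}(h)$. If one had $g=s'+b$ with $s'\in S\setminus\{s\}$ and $b\in B=\pi^{-1}(W)$, then applying $\pi$ would give $h=\pi(s')+\pi(b)\in(\pi(S)\setminus\{\pi(s)\})+W$, contradicting the choice of $h$. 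Hence $g\notin(S\setminus\{s\})+B$, as required.

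There is no genuine obstacle here beyond bookkeeping; the entire argument is driven by the functoriality of sumsets under the homomorphism $\pi$. The only subtle point is that one must use the hypothesis $\#\pi(S)=\#S$ to guarantee that the excision of $s$ from $S$ corresponds exactly to the excision of $\pi(s)$ from $\pi(S)$, so that a minimality witness downstairs can be transported, via lifting, to a minimality witness upstairs.
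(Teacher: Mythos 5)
Your proof is correct and follows exactly the paper's route: the paper also takes $\pi^{-1}(W)$ as the complement and deduces minimality from the injectivity of $\pi$ restricted to $S$, merely stating in one line what you have written out in full. The only difference is that you supply the lifting details that the paper leaves implicit.
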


\begin{proof}
Since $G$ is abelian, it follows that $S$ is a complement to $\pi^\mo(W)$. Moreover, since $S$ is a minimal complement to $W$ and the image of $S$ under $\pi:G\to G_1$ contains $\# S$ elements, the set $S$ is a minimal complement to $\pi^\mo(W)$. 
\end{proof}

\begin{proof}[\textbf{Proof of Theorem \ref{Thm:FiniteSubsetIsAMinComp}}]
Let $S$ be a nonempty finite subset of a free abelian group $G$. If $G$ has finite rank, then we identify $G$ with $\bbZ^n$ with $n= \mathrm{rk} G$. When $n$ is equal to one, the result follows from \cite[Theorem 9]{Kwon19}. Moreover, when $n\geq 2$, by Proposition \ref{Prop:ImageLarge}, there exists an automorphism $\varphi$ of $G = \bbZ^n$ such that the image of $\varphi(S)$ under the the projection map 
$$ \pi:\bbZ^n\to \bbZ \text{ (onto the first coordinate) } $$
contains exactly $\# S$ elements. Note that $\pi(\varphi(S))$ contains $\# \varphi(S)$ elements and by \cite[Theorem 9]{Kwon19}, the subset $\pi(\varphi(S))$ of $\bbZ$ is a minimal complement to some subset of $\bbZ$. Hence by Lemma \ref{Lemma:MinCompProj}, $\varphi(S)$ is a minimal complement to some subset $W$ of $\bbZ^n$, and hence $S$ is a minimal complement to $\varphi^\mo(W)$. Consequently, any non-empty subset of any free abelian group of finite rank is a minimal complement to some subset. 

Moreover, when $G$ has infinite rank, i.e., when $G$ is isomorphic to the direct product $\bbZ^I$ for an infinite set $I$, it follows that for some finite subset $J$ of $I$, the image of $S$ under the projection map 
$\pi: \bbZ^I \to \bbZ^J$ (obtained by restricting the elements of $\bbZ^I$ (considered as the group of all functions from $I$ to $\bbZ$) to $J$) contains exactly $\# S$ elements. Indeed, if for each pair $(s,t)$ with $s, t\in S$, choose an element $i_{s,t} \in I$ such that $s, t$ take different values at $i_{s,t}$, then $J$ can be taken to be $$J = \{i_{s,t} \,|\, (s, t) \in S\times S, s\neq t\}.$$ 
By the conclusion of the previous paragraph, it follows that $\pi(S)$ is a minimal complement to some subset of $\bbZ^J$. Hence by Lemma \ref{Lemma:MinCompProj}, it follows that $S$ is a minimal complement to some subset of $G$. This completes the proof of Theorem \ref{Thm:FiniteSubsetIsAMinComp}.
\end{proof}

Now, we turn our attention to co-minimal pairs (cf. Definition \ref{DefnCoMinimal}) which measures the tightness property of a set and its complement. To show Theorem \ref{Thm:CoMinimal}, we need a result from a prior work.
\begin{theorem}[{\cite[Theorem 2.1]{MinComp1}}]\label{finMin}
Let $G$ be an arbitrary group with $S$ a nonempty finite subset of $G$. Then every complement of $S$ in $G$ contains a minimal complement to $S$. 
\end{theorem}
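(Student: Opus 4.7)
The plan is to apply Zorn's lemma to the poset of all subsets of $T$ that remain complements of $S$, ordered by inclusion. More precisely, given a complement $T$ of $S$ in $G$, I would set
\[
\mathcal{P} = \{T' \subseteq T : S\cdot T' = G\}
\]
and look for a minimal element, which by definition is a minimal complement to $S$ contained in $T$. Zorn's lemma in its dual form reduces the task to showing that every chain $\{T_\alpha\}_\alpha$ in $\mathcal{P}$ admits a lower bound in $\mathcal{P}$; the natural candidate is of course $T^* := \bigcap_\alpha T_\alpha$.

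The crux is then to verify that $S\cdot T^* = G$, and this is the step where the finiteness of $S$ is indispensable. I would fix an arbitrary $g \in G$ and consider the finite set $S^{\mo}g := \{s^{\mo}g : s \in S\}$. For each $\alpha$, since $S\cdot T_\alpha = G$, the intersection $S^{\mo}g \cap T_\alpha$ is nonempty. The family $\{S^{\mo}g \cap T_\alpha\}_\alpha$ is totally ordered by inclusion (inherited from the chain $\{T_\alpha\}$) and consists of subsets of the finite set $S^{\mo}g$, so it contains only finitely many distinct members and therefore admits a minimum. This minimum coincides with $\bigcap_\alpha (S^{\mo}g \cap T_\alpha)$, which is therefore nonempty. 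Any $t$ in this intersection lies in $T^*$ and satisfies $t = s^{\mo}g$ for some $s \in S$, so $g = s\cdot t \in S\cdot T^*$; as $g$ was arbitrary, $T^* \in \mathcal{P}$.

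Once this lower-bound property is in hand, Zorn's lemma produces a minimal element $T_{\min}$ of $\mathcal{P}$. By construction $T_{\min} \subseteq T$ and $S \cdot T_{\min} = G$, while minimality in $\mathcal{P}$ translates precisely to the condition $S\cdot(T_{\min} \setminus \{t\}) \neq G$ for every $t \in T_{\min}$; hence $T_{\min}$ is a minimal complement to $S$ contained in $T$. The main obstacle is the step showing that intersections of descending chains of complements remain complements, which genuinely relies on $S$ being finite: in the additive group $\bbZ$, the sets $T_n = \{m \in \bbZ : m \leq -n\}$ each form a complement of $S = \bbN$, yet $\bigcap_n T_n = \emptyset$ is not one, showing that a direct transposition of this argument cannot cover infinite $S$ without substantial modification.
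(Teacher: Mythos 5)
Your argument is correct, and it is essentially the same as the one in the cited source \cite[Theorem 2.1]{MinComp1} (the paper itself only quotes the result without reproving it): a Zorn's lemma argument on sub-complements ordered by inclusion, where the intersection of a chain is shown to remain a complement because for each $g\in G$ the finite set $S^{\mo}g$ meets every member of the chain and a chain of nonempty subsets of a finite set has nonempty intersection. Your closing remark correctly identifies the exact point where finiteness of $S$ is used, with a valid counterexample for infinite $S$.
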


This implies that a non-empty finite set $S\subseteq G$ belongs to some minimal pair. Using the above Theorem \ref{finMin} and Theorem \ref{Thm:FiniteSubsetIsAMinComp} we shall show that it also belongs to a co-minimal pair when $G$ is a free abelian group. This will establish Theorem \ref{Thm:CoMinimal}.

\begin{proof}[\textbf{Proof of Theorem \ref{Thm:CoMinimal}}]
Let $A$ be a non-empty finite subset of a free abelian group $G$. By Theorem \ref{Thm:FiniteSubsetIsAMinComp}, $A$ is a minimal complement to some subset $B$ of $G$. By Theorem \ref{finMin}, any complement $C$ of a non-empty finite subset $W$ of a group contains a minimal complement to $W$. Consequently, $B$ contains a minimal complement $B'$ of $A$. Since $A+B'=G$, $A$ is a minimal complement to $B$ and $B$ contains $A$, it follows that $A$ is a minimal complement to $B'$ . Hence $(A, B')$ is a co-minimal pair. 

Let $S = \{g, h\}$ be a two-element subset of a group $G$. Then $S$ is a minimal complement to $G\setminus \{g^\mo h\}$. Indeed, 
$$g\cdot (G\setminus \{g^\mo h\})= G \setminus \{h\}$$ 
and 
$$h \in G \setminus \{hg^\mo h\} 
= (G\cdot h) \setminus \{hg^\mo h\} 
= 
h\cdot (G\setminus \{g^\mo h\}),$$ 
which implies that $S$ is a minimal left complement to $G\setminus \{g^\mo h\}$. By \cite[Theorem 2.1]{MinComp1}, $G\setminus \{g^\mo h\}$ contains a minimal right complement to $S$. Hence $(S, R)$ is a co-minimal pair for some subset $R$ of $G$. 

Note that the proof of \cite[Theorem 2.1]{MinComp1} can be suitably adapted to prove that for a non-empty finite subset $S$ of a group $G$, every left complement to $S$ contains a minimal left complement to $S$. Using this result and an argument similar to the above, it follows that $(L, S)$ is a co-minimal pair for some subset $L$ of $G$. 
\end{proof}

\begin{remark}
Let $G$ be a finitely generated abelian group isomorphic to the direct product of its torsion part $G_{\mathrm{tors}}$ and a free abelian group $F$. If $A$ is a non-empty finite subset of $G$ such that it is contained in a single coset of $F$ in $G$, then Lemma \ref{Lemma:CoMinimalTranslate} combined with Theorems \ref{Thm:FiniteSubsetIsAMinComp}, \ref{Thm:CoMinimal} imply that $A$ is a minimal complement to some subset of $G$ and it is a part of some co-minimal pair in $G$. 
\end{remark}

\begin{proof}[Proof of Theorem \ref{Thm:NecessaryCond}]
Let $g_1, \cdots, g_k$ be elements of $G$ and assume that $(G\setminus \{g_1, \cdots, g_k\}, S)$ is a co-minimal pair for some non-empty subset $S$ of $G$. Replacing the set $S$ by one of its right translates, we may assume that $S$ contains $e$. Since $(G\setminus \{g_1, \cdots, g_k\})\cdot e$ does not contain any one of $g_1, \cdots, g_k$, it follows that for each $1\leq i\leq k$, there exists $h_i \in G\setminus \{g_1, \cdots, g_k\}$ and $s_{i}\in S\setminus \{e\}$ such that $g_i = h_i s_{i}$. 
Let $s$ be an element of $S$ other than $e$. 
Since $G$ contains at least $3k+1$ elements, it follows that there exists an element $g\in G$ such that $g\notin \{g_1, \cdots, g_k\}\cup (\{g_1, \cdots, g_k\}\cdot s) \cup \{h_1, \cdots, h_k\}$. 
Since $g\notin \{g_1, \cdots, g_k\}\cdot s$ and $s\neq e$, it follows that $g\in (G\setminus \{g, g_1, \cdots, g_k\})\cdot s$. 
For any $1\leq i \leq k$, the elements $h_i$ lies in $G\setminus \{g, g_1, \cdots, g_k\}$ and hence $g_i \in (G\setminus \{g, g_1, \cdots, g_k\}) \cdot S$. Since $e\in S$, it follows that $G\setminus \{g, g_1, \cdots, g_k\}$ is a complement to $S$. This implies that $G\setminus \{g_1, \cdots, g_k\}$ is not a minimal complement to $S$ and consequently $(G\setminus \{g_1, \cdots, g_k\}, S)$ is not a co-minimal pair.

Let $X$ be a proper subset of $G$ containing at least $\frac 23 |G| + \frac 13$ elements. Note that 
$$
|G| - |X| 
\leq 
|G| - \frac 23 |G| - \frac 13
= \frac {|G|-1}3,
$$
which shows that $G$ contains at least $3(|G| - |X|) + 1$ elements. Hence $X$ is not a part of a co-minimal pair. 
\end{proof}

\begin{remark}
We contrast Theorem  \ref{Thm:FiniteSubsetIsAMinComp} with Theorem \ref{Thm:NecessaryCond}. In the case of free abelian groups, a finite set is always a minimal complement to some subset while this is no longer the case for finite groups. It will be interesting to characterise the groups in which given any finite subset, it is always a minimal complement to some subset. Note that by Theorem \ref{finMin}, the direct question of Nathanson \cite[Problem 12]{Nat11}, i.e., whether any finite subset admits a minimal complement or not, has an answer in the affirmative in any group. This emphasises the fact that the dual question is harder to answer even in the fairly simpler case of finite subsets.
\end{remark}

Further, in finite groups, the following question is natural. 

\begin{questionIntro}
\quad 
\begin{enumerate}
\item For each $n\geq 1$, determine the set $\calS^\cyc_n$ such that for any $k\in \calS^\cyc_n$, some subset of size $k$ of $\bbZ/n\bbZ$ is a part of a co-minimal pair. 

\item For each $n\geq 1$, determine the set $\calA^\cyc_n$ such that for any $k\in \calA^\cyc_n$, any subset of size $k$ of $\bbZ/n\bbZ$ is a part of a co-minimal pair. 

\item For each $n\geq 1$, determine the set $\calS^\ab_n$ such that for any $k\in \calS^\ab_n$, any abelian group of order $n$ contains a subset of size $k$ which is a part of a co-minimal pair. 

\item For each $n\geq 1$, determine the set $\calA^\ab_n$ such that for any $k\in \calA^\ab_n$, any subset of size $k$ of any abelian group of order $n$ is a part of a co-minimal pair. 

\item For each $n\geq 1$, determine the set $\calS_n$ such that for any $k\in \calS_n$, any group of order $n$ contains a subset of size $k$ which is a part of a co-minimal pair. 

\item For each $n\geq 1$, determine the set $\calA_n$ such that for any $k\in \calA_n$, any subset of size $k$ of any group of order $n$ is a part of a co-minimal pair. 
\end{enumerate}
\end{questionIntro}

\begin{remark}
By Theorem \ref{Thm:NecessaryCond}, the maximum of each of the above sets (excluding $n$) is $< \frac 23n + \frac 13$ (for $n\geq 2$).
\end{remark}

Further, Theorem \ref{Thm:NecessaryCond} has interesting implications in the case of infinite groups.
\begin{corollary}\label{Cor:NecessaryCond}
	Let $G$ be an infinite group. Let $A$ be a subset of $G$ such that $G\setminus A = A'$ where $A'$ is a finite set, i.e., the set-theoretic complement of $A$ is finite. Then $A$ cannot be a minimal complement to any set in $G$.
\end{corollary}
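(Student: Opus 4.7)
The plan is to reduce Corollary \ref{Cor:NecessaryCond} directly to Theorem \ref{Thm:NecessaryCond}. I would write $A' = \{g_1, \ldots, g_k\}$ with $k \geq 1$ a finite positive integer (the degenerate case $A' = \emptyset$, i.e.\ $A = G$, is not intended to be covered). Then $A$ is a subset of $G$ whose set-theoretic complement inside $G$ has exactly $k$ elements.

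Since $G$ is infinite, the cardinality hypothesis of Theorem \ref{Thm:NecessaryCond}, namely that $G$ contains at least $3k+1$ elements, is automatically satisfied. The subtle point to flag is that although the expression $|G|-k$ in the theorem statement is ambiguous for infinite $G$, its parenthetical reformulation --- ``having exactly $k$ elements in the set-theoretic complement in $G$'' --- is well-defined regardless of cardinality and is what the proof actually uses. I would also point out that inspection of the proof of Theorem \ref{Thm:NecessaryCond} shows it establishes the stronger conclusion (stronger than merely failing to be part of a co-minimal pair) that $A = G \setminus \{g_1, \ldots, g_k\}$ is not a minimal complement to any subset $S$ of $G$: given any hypothetical $S$, one translates so $e \in S$ and then produces an element $g \in G \setminus (\{g_1, \ldots, g_k\} \cup \{g_1, \ldots, g_k\} \cdot s \cup \{h_1, \ldots, h_k\})$ so that $G \setminus \{g, g_1, \ldots, g_k\}$ remains a complement to $S$, contradicting minimality of $A$.

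Applying this stronger form of Theorem \ref{Thm:NecessaryCond} to $A$ immediately gives the conclusion. The only obstacle, if one can call it that, is checking that the argument in the proof of Theorem \ref{Thm:NecessaryCond} does not quietly exploit finiteness of $G$ anywhere; it uses only the cardinality bound $|G| \geq 3k+1$, which is trivially satisfied for every infinite $G$ and every fixed positive integer $k$, so no further work is required and the corollary follows in one line.
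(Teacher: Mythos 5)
Your proposal is correct and follows essentially the same route as the paper: the authors likewise deduce the corollary from (the proof of) Theorem \ref{Thm:NecessaryCond}, observing that for an infinite group $G$ the hypothesis $|G|\geq 3k+1$ holds automatically for every integer $k\geq 1$, so that a set with exactly $k$ elements in its set-theoretic complement cannot be a minimal complement to any subset. Your additional remarks --- that the relevant reading of ``size $|G|-k$'' is the parenthetical one about the set-theoretic complement, and that the argument proves the stronger ``not a minimal complement to any set'' conclusion without using finiteness of $G$ --- are exactly the points the paper's one-line proof relies on implicitly.
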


\begin{proof}
	The proof follows from the proof of Theorem \ref{Thm:NecessaryCond} noting the fact that since $G$ is infinite, for each integer $k\geq 1$, the cardinality of $G$ is $> 3k+1$. Hence if $A'$ is a finite set with $|A'| = k$, then by the hypothesis $A$ has exactly $k$ elements in its set theoretic complement. Consequently, $A$ cannot be a minimal complement to any set in $G$.  
\end{proof}

One can contrast the above result with \cite[Theorem A(2)]{BS20}, which states that any subset $A$ of an infinite group $G$ which has finite set-theoretic complement in $G$ admits a minimal complement. 

Moving on to infinite groups, Theorem \ref{Thm:CoMinimal} gives us co-minimal pairs $(A, B)$ with $A $ a non-empty finite set and $B$ an infinite set. It is natural to ask about the existence of infinite co-minimal pairs, i.e., infinite subsets $A,B$ forming a co-minimal pair. This is open in general, but certain well-behaved sets can be easily seen to form such pairs.
\begin{lemma}
Let  $A, B$ be two infinite subgroups of an abelian $G$ such that $A\times B$ is isomorphic to $G$ (under the product of inclusion maps), then $(A, B), (B, A)$ are co-minimal pairs.
\end{lemma}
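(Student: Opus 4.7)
The plan is to extract everything from the uniqueness of representation that the direct product decomposition provides. Writing the group operation additively, the hypothesis says the map $\iota\colon A\times B\to G$ given by $\iota(a,b)=a+b$ is a group isomorphism. In particular, $\iota$ is surjective, which immediately yields $A+B=G$, so $(A,B)$ (and hence $(B,A)$) is a complement pair. Moreover, injectivity of $\iota$ gives two facts that will do all the work: every $g\in G$ has a \emph{unique} expression $g=a+b$ with $a\in A$, $b\in B$, and $A\cap B=\{0\}$.

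To verify that $A$ is a minimal complement to $B$, I would fix an arbitrary $a_0\in A$ and show $a_0\notin (A\setminus\{a_0\})+B$. Indeed, if $a_0=a+b$ with $a\in A$ and $b\in B$, then since $a_0=a_0+0$ is another such expression (using $0\in B$), uniqueness forces $a=a_0$, contradicting $a\in A\setminus\{a_0\}$. Hence $a_0\notin (A\setminus\{a_0\})+B$, so $(A\setminus\{a_0\})+B\ne G$. Since $a_0$ was arbitrary, $A$ is a minimal complement to $B$. The symmetric argument, with the roles of $A$ and $B$ swapped (using $0\in A$), shows that $B$ is a minimal complement to $A$. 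Combining, $(A,B)$ is a co-minimal pair, and by the same reasoning $(B,A)$ is a co-minimal pair as well.

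There is no real obstacle here: the statement is essentially a tautology about internal direct product decompositions, and the only content is the observation that in such a decomposition the identity element of each factor is forced to be the unique ``second coordinate'' that represents an element of the other factor. The hypothesis that $A$ and $B$ are infinite is not used in the argument; it is only there to distinguish the statement from the finite cases already handled in Theorem~\ref{Thm:CoMinimal}.
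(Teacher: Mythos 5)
Your proof is correct and is exactly the argument the paper has in mind: the paper simply declares the proof "immediate," and what you have written is the standard uniqueness-of-representation argument that makes that immediacy precise. Your side remark that the infiniteness of $A$ and $B$ plays no role is also accurate.
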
 
\begin{proof}
The proof is immediate.
\end{proof}

To have more examples of co-minimal pairs in $\bbZ^n$, note that if $M$ is an idempotent element of $\gln_n(\bbZ)$ other than the identity, then the subsets $$A=\ker M, B= \ker (I_n-M)$$ of $\bbZ^n$ form a co-minimal pair. 
There are examples of co-minimal pairs which are not of this form. 
For instance, if $H$ is a subgroup of a group $G$ and $\{g_\lambda\}_{\lambda\in \Lambda}$ is a set of distinct left coset representatives of $H$ in $G$, then $$(\{g_\lambda\}_{\lambda\in \Lambda}, H)$$ is a co-minimal pair. 
More involved constructions of infinite co-minimal pairs in certain abelian groups are provided in section \ref{sec4}.

\section{Some properties of co-minimal pairs}\label{sec3}

In this section, we state several important properties of co-minimal pairs. The first property is that their existence depends on the subgroup in which they are embedded, i.e, if $A\subsetneq H \subsetneq G $, with $H$ being a subgroup of a group $G$, it is possible that $A$ is a part of a co-minimal pair in $G$ but not in $H$. This is in contrast with the property of existence of minimal complements which was independent of the subgroups in which the given subset is embedded, cf. \cite[Theorem A]{MinComp2}. As an example, $(\{e^{\pi i /2}, 1, e^{-\pi i /2}\}, \{1, e^{\pi i /4}, e^{-\pi i /2}, e^{-\pi i /4}\})$ is a co-minimal pair in the group $\mu_8$ of eigth roots of unity in $\bbC$. But $\{e^{\pi i /2}, 1, e^{-\pi i /2}\}$ is not a part of co-minimal pair in the group $\mu_4$ of fourth roots of unity in $\bbC$. Thus, the obvious analogue of \cite[Theorem A]{MinComp2} does not hold in the context of co-minimal pairs. 

\begin{lemma}
Let $X$ be a nonempty subset of a group $G$. If $X$ contains a left (resp. right) translate of itself as a proper subset, i.e., $X'\subsetneq S$ where $X' = gX$ (resp. $X' = Xg$) for some $g\in G$, then $(X, Y)$ (resp. $(Y, X)$) is not co-minimal pair for any nonempty subset $Y$ of $G$. Consequently, for an element $x\in G$, the set $\{x, x^2, x^3, \cdots\}$ is a part of a co-minimal pair if and only if $x$ is of finite order. 
\end{lemma}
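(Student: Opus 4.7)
The plan is to read the hypothesis $gX \subsetneq X$ directly into the definition of a minimal left complement, using that left multiplication by $g$ is a bijection of $G$. I would pick any $x_0 \in X \setminus gX$, so that $gX \subseteq X \setminus \{x_0\}$. For any nonempty $Y$ with $XY = G$, multiplying by $g$ on the left gives $gXY = G$, whence
$(X \setminus \{x_0\})Y \supseteq gXY = G$.
Thus $X$ is not minimal as a left complement to $Y$, so $(X, Y)$ cannot be a co-minimal pair, regardless of the choice of $Y$. The right-translate case is entirely symmetric: if $Xg \subsetneq X$ and $x_0 \in X\setminus Xg$, then for any $Y$ with $YX = G$ one has $Y(X\setminus\{x_0\}) \supseteq YXg = G$, so $X$ is not a minimal right complement and $(Y, X)$ is not co-minimal.

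For the consequence about $X = \{x, x^2, x^3, \ldots\}$, I would first check that if $x$ has infinite order then both $gX$ with $g = x$, namely $xX = \{x^2, x^3, \ldots\}$, and $Xx = \{x^2, x^3, \ldots\}$ are proper subsets of $X$: the element $x$ cannot lie in $xX$, since $x = x^{k+1}$ for some $k \geq 1$ would force $x^k = e$, contradicting infinite order. The first part then rules out $X$ being part of any co-minimal pair on either side. Conversely, if $x$ has finite order, then $X = \langle x\rangle$ is a finite subgroup of $G$, and I would invoke the observation recorded just before this lemma — that a subgroup together with a full set of its left coset representatives forms a co-minimal pair — to conclude that $X$ is part of a co-minimal pair.

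I do not expect any serious obstacle here; the entire argument rests on the one-line fact that translation is a bijection of $G$. The main thing to keep straight is the bookkeeping of left versus right: co-minimality of $(A, B)$ requires $A$ to be minimal as a \emph{left} complement and $B$ to be minimal as a \emph{right} complement, so it suffices to exhibit non-minimality on the corresponding side to destroy the relevant pair. The phrasing of the lemma with the parenthetical ``(resp.)'' is designed precisely so that each case uses the matching side of the definition.
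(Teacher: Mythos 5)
Your proof is correct and follows essentially the same route as the paper: observe that $gX\cdot Y = g(X\cdot Y) = G$ because left translation is a bijection, so the proper subset $gX$ (hence $X\setminus\{x_0\}$) is already a complement, killing minimality on the relevant side; then for $\{x,x^2,\dots\}$ apply this with $g=x$ in the infinite-order case and use the subgroup/coset-representative example in the finite-order case. The only difference is cosmetic — you make explicit the passage from "$gX$ is a complement" to "$X\setminus\{x_0\}$ is a complement," which the paper leaves implicit.
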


\begin{proof}
Suppose $X$ contains a left translate of itself as a proper subset, i.e., $X'\subsetneq S$ where $X' = gX$ for some $g\in G$. For any nonempty subset $Y$ of $G$ satisfying $X\cdot Y = G$, it follows that $X' \cdot Y = gX \cdots Y = gG = G$. Thus $X$ is not a left minimal complement to any nonempty subset $Y$ of $G$. Hence $(X, Y)$ is not a co-minimal pair for any nonempty subset $Y$ of $G$. 
Similarly, it follows that if $X$ contains a right translate of itself as a proper subset, i.e., $X'\subsetneq S$ where $X' = Xg$ for some $g\in G$, then $(Y, X)$ is not co-minimal pair for any nonempty subset $Y$ of $G$. The first statement follows. 

Assume that the order of $x$ is infinite. Then the set $\{x, x^2, x^3, \cdots\}$ contains a translate of itself as a proper subset since $x\cdot \{x, x^2, x^3, \cdots\}\subsetneq \{x, x^2, x^3, \cdots\}$. So the set $\{x, x^2, x^3, \cdots\}$ is not a part of a co-minimal pair. 

Assume that the order of $x$ is finite. Let $H$ denote the subgroup of $G$ generated by $x$. Note that the set $\{x, x^2, x^3, \cdots\}$ is equal to $H$. Let $R$ denote a subset of $G$ consisting of distinct left coset representatives of $H$ in $G$. Then $(R, H)$, i.e., $(R, \{x, x^2, x^3, \cdots\})$ is a co-minimal pair. 

This proves the second statement. 
\end{proof}
Finally, we show that co-minimal pairs are preserved under arbitrary cartesian products. 

\begin{proposition}\label{co-minimal cartesian}
Let $\mathbb{I}$ be a non-empty indexing set and $(A_{i},B_{i}), i\in \mathbb{I}$ be co-minimal pairs in groups $G_{i}, i\in \mathbb{I}$. Then $(\prod_{i\in \mathbb{I}}
A_{i}, \prod_{i\in \mathbb{I}}B_{i})$ is a co-minimal pair in $G = \prod_{i\in \mathbb{I}}G_{i}$.
\end{proposition}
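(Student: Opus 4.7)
The plan is to verify directly the three conditions packaged in Definition \ref{DefnCoMinimal}: that $\prod_{i\in\mathbb{I}} A_i$ and $\prod_{i\in\mathbb{I}} B_i$ multiply to $\prod_{i\in\mathbb{I}} G_i$, and that each factor is minimal on the appropriate side. The complement equality is immediate since for any $(g_i)\in\prod_{i\in\mathbb{I}} G_i$, the assumption $A_i \cdot B_i = G_i$ lets us (coordinate-wise, invoking choice when $\mathbb{I}$ is infinite) pick $a_i\in A_i$, $b_i\in B_i$ with $a_i b_i = g_i$, yielding the factorisation $(g_i) = (a_i)(b_i)$.

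The crux is converting the component-wise minimality into a single witness in the product. Fix any $(a_i^0)\in\prod_{i\in\mathbb{I}} A_i$. For each $i$, since $A_i$ is a minimal left complement to $B_i$, there exists $g_i\in G_i$ with $g_i\in A_i\cdot B_i$ but $g_i\notin (A_i\setminus\{a_i^0\})\cdot B_i$; equivalently, in every factorisation $g_i = a_i b_i$ with $a_i\in A_i$, $b_i\in B_i$, one has $a_i = a_i^0$. Assemble these (again using choice) into $(g_i)\in\prod_{i\in\mathbb{I}} G_i$. Any factorisation $(g_i) = (a_i)(b_i)$ with $(a_i)\in\prod_{i\in\mathbb{I}} A_i$ and $(b_i)\in\prod_{i\in\mathbb{I}} B_i$ gives a coordinate-wise factorisation $g_i = a_i b_i$, which forces $a_i = a_i^0$ for every $i$, hence $(a_i) = (a_i^0)$. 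Therefore
$$(g_i)\notin \Bigl(\prod_{i\in\mathbb{I}} A_i\setminus\{(a_i^0)\}\Bigr)\cdot \prod_{i\in\mathbb{I}} B_i,$$
which proves the left minimality of $\prod_{i\in\mathbb{I}} A_i$.

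The right minimality of $\prod_{i\in\mathbb{I}} B_i$ follows by an entirely symmetric argument, exchanging the roles of $A_i$ and $B_i$ and starting from the right minimality of $B_i$ in $G_i$. There is no serious obstacle; the only point worth flagging is that both the complement property and the choice of witnesses $g_i$ invoke the axiom of choice when $\mathbb{I}$ is infinite, and that the argument works verbatim in the non-abelian setting because we have carefully kept track of left versus right actions throughout.
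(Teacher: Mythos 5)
Your proof is correct and follows essentially the same route as the paper's: verify the complement property coordinate-wise, then assemble coordinate-wise minimality witnesses $g_i$ into a single element of the product whose every factorisation is forced to use the removed element. If anything, your phrasing of the witness property (uniqueness only of the $A_i$-component of each factorisation of $g_i$, not of the full factorisation) is slightly more careful than the paper's, and your explicit flagging of the axiom of choice for infinite $\mathbb{I}$ is a reasonable addition.
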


\begin{proof}
To avoid confusion, just for the proof of this proposition, we denote complement pairs and co-minimal pairs both using $[\, , \, ]$ instead of the usual $(\, , \,)$. Let $A = \prod_{i\in \mathbb{I}}
A_{i}$ and $B = \prod_{i\in \mathbb{I}}
B_{i}$. For $i\in \mathbb{I},$ let $[A_{i},B_{i}]$ be co-minimal pairs in the groups $G_{i}$. Then
\begin{align*}
A_{i}\cdot B_{i} & = G_{i}, \,\, (A_{i}\setminus \lbrace a \rbrace ) \cdot B_{i} \subsetneq G_{i} \,\,\forall \,a\in A_{i} \text{ and } \,\, A_{i}\cdot (B_{i}\setminus \lbrace b\rbrace ) \subsetneq G_{i} \,\,\forall \,b\in B_{i}.
\end{align*}
Now $\prod_{i\in \mathbb{I}}
A_{i}, \prod_{i\in \mathbb{I}}
B_{i}\subseteq \prod_{i\in \mathbb{I}}
G_{i}$. It is clear that 
$$(\prod_{i\in \mathbb{I}}
A_{i})\, \cdot \,(\prod_{i\in \mathbb{I}}
B_{i}) = \prod_{i\in \mathbb{I}}
(A_{i}.B_{i}) = \prod_{i\in \mathbb{I}}
G_{i}.$$ Thus $[\prod_{i\in \mathbb{I}}
A_{i}, \prod_{i\in \mathbb{I}}
B_{i}]$ or $[A,B]$ forms a complement pair in $\prod_{i\in \mathbb{I}}
G_{i}$.

To show that it is a co-minimal pair, let us remove an element $b = \prod_{j\in \mathbb{I}}
b_{j}$ from $B$ and look at the set $ B \setminus \lbrace b\rbrace $. We show that $ B \setminus \lbrace b\rbrace $ is not a complement to $A$ in $G$, i.e., $A\cdot (B \setminus \lbrace b\rbrace)\subsetneq G$. For each $i\in \mathbb{I}$, since $B_{i}$ is a minimal complement to $A_{i}$, $\exists a_{i}\in A_{i},g_{i}\in G_{i}$ such that the only way of representing $g_{i}$ in $A_{i}\cdot B_{i}$ is $a_{i}b_{i}$. It is clear that $ \prod_{i\in \mathbb{I}}
g_{i}\notin \prod_{i\in \mathbb{I}}
A_{i}\cdot (B \setminus \lbrace b\rbrace)$ because $  \prod_{i\in \mathbb{I}}
g_{i} $ can only be represented in $(\prod_{i\in \mathbb{I}}
A_{i})\cdot (\prod_{i\in \mathbb{I}}
B_{i})$ as $  \prod_{i\in \mathbb{I}}
a_{i}b_{i}$. Thus $ \prod_{i\in \mathbb{I}}
B_{i}$ is a minimal complement to $ \prod_{i\in \mathbb{I}}
A_{i}$. An exactly similar argument shows that $ \prod_{i\in \mathbb{I}}
A_{i}$ is also a minimal complement to $ \prod_{i\in \mathbb{I}}
B_{i}$. This shows that $[\prod_{i\in \mathbb{I}}
A_{i}, \prod_{i\in \mathbb{I}}
B_{i}]$ or $[A,B]$ forms a co-minimal pair in $\prod_{i\in \mathbb{I}}
G_{i}$. 
\end{proof}

\section{Spiked subsets and co-minimality}\label{sec4}

The above construction of co-minimal pairs was in the context of cartesian product of groups. If instead we take product groups, then the following can be established. Suppose $G_1, G_2$ are subgroups of an abelian group $G$ such that the map 
$$G_1\times G_2 \rightarrow G \quad \text{ defined by } (g_1, g_2)\mapsto g_1g_2$$ 
is an isomorphism. We identify the group $G$ with $G_1\times G_2$ via this isomorphism. The subsets of $G$ of the form $B\times B'$, more specifically, the subsets of $G$ of the form $B\times G_2$ are one of the simplest subsets of $G$. 

\begin{lemma}\label{Lemma:Spike}
The subsets of $G$ of the form $B \times G_{2}$ with $B\subseteq G_1$ is a part of a co-minimal pair in $G$ if and only if $B$ is a part of co-minimal pair in $G_1$. 
\end{lemma}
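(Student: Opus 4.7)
The plan is to prove both directions separately, with the nontrivial content lying in how a co-minimal pair involving $A := B \times G_2$ in $G = G_1 \times G_2$ forces a co-minimal pair for $B$ in $G_1$ via the projection $\pi_1 : G \to G_1$. Throughout, let $e$ denote the identity of $G_2$. For the ``if'' direction, given a co-minimal pair $(B, C)$ in $G_1$, I would take $D := C \times \{e\}$ as the candidate complement to $A$. Then $A + D = G$ follows at once from $B + C = G_1$, and the rigidity of $D$ in its second coordinate makes the minimality checks transparent: for any $(b, g) \in A$, minimality of $B$ in $G_1$ produces $y_1 \in G_1$ whose every representation in $B + C$ uses $b$, and then $(y_1, g)$ has as its only representation in $A + D$ the sum $(b, g) + (c, e)$, witnessing $(y_1, g) \notin (A \setminus \{(b, g)\}) + D$; the symmetric computation, using minimality of $C$, shows $D$ is minimal to $A$.

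For the converse, suppose $(A, D)$ is a co-minimal pair in $G$ for some $D$, and set $C := \pi_1(D)$; projecting $A + D = G$ gives $B + C = G_1$. To prove $B$ is minimal to $C$, fix $b \in B$ and apply minimality of $A$ at $(b, e)$ to obtain $(y_1, y_2) \in G$ whose every representation $(b', h') + (d_1, d_2)$ with $(b', h') \in A$ and $(d_1, d_2) \in D$ forces $(b', h') = (b, e)$. If one had $y_1 = b' + c'$ with $b' \in B \setminus \{b\}$ and $c' \in C$, then lifting $c'$ to some $(c', v') \in D$ would yield the representation $(y_1, y_2) = (b', y_2 - v') + (c', v')$ with $b' \neq b$, contradicting the previous sentence; hence $y_1 \notin (B \setminus \{b\}) + C$. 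A symmetric argument, starting from any lift $(c, v) \in D$ of a given $c \in C$ and using minimality of $D$ at $(c, v)$, supplies a witness to $B + (C \setminus \{c\}) \neq G_1$, completing co-minimality of $(B, C)$ in $G_1$.

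The main obstacle I anticipate is the converse direction, where $D$ could a priori be a very wild subset of $G_1 \times G_2$, with $\pi_1|_D$ highly non-injective. The observation that removes this worry is that $A = B \times G_2$ is ``tall enough'' in the $G_2$ direction that any alternative lift of a first coordinate in $D$ can be absorbed by adjusting the second coordinate of the $A$-summand; this is precisely what converts uniqueness of the $A$-summand (supplied by minimality of $A$) into uniqueness of the $B$-coordinate, and analogously for $D$ and $C$, so no structural hypothesis on $D$ (such as being a graph) is actually required in the proof.
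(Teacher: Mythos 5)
Your proof is correct and follows essentially the same route as the paper: the complement for $B\times G_2$ is obtained as $C\times\{e\}$ from a co-minimal pair $(B,C)$ in $G_1$, and conversely the projection $\pi_1(D)$ serves as the co-minimal partner of $B$. The paper states these two facts without verification, whereas you supply the (correct) details, including the key point that the $G_2$-coordinate of the $A$-summand can always be adjusted to absorb an arbitrary lift in $D$.
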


\begin{proof}
Suppose $B\times G_2$ is a part of a co-minimal pair $(B\times G_2, S)$ in $G$. Denote the projection map $G \to G_1$ by $\pi$. Note that the image $\pi(S)$ is a minimal complement to $B$. Since $B\times G_2$ is a minimal complement to $S$, it follows that $B$ is a minimal complement to $\pi(S)$. So $(B, \pi(S))$ is a co-minimal pair. 

Suppose $(B, M)$ is a co-minimal pair for some subset $M$ of $G_1$. Then $(B\times G_2, M\times \{0\})$ is a co-minimal pair. 
\end{proof}

The subsets of $G$ of the form $B\times G_2$ are examples of a more general class of subsets, called `spiked subsets' as introduced in \cite{MinComp2}. For the sake of completeness, we recall its definition. 
A subset $X$ of $\bbZ^{k+1}$ is called a \textit{spiked subset} if  
$$\calB\times \bbZ
\subseteq
X
\subseteq 
(\calB\times \bbZ )
\bigsqcup 
\left(
\sqcup 
_{x\in \bbZ^k\setminus \calB}
\left(
\{
x
\}
\times 
(-\infty, u(x))
\right)
\right)$$
holds for some nonempty subset $\calB$ of $\bbZ^k$ and some function $u:\bbZ^k\to \bbZ$. The set $\calB$ is called the \textit{base} of $X$. 
We will say that such a set $X$ is a $u$-bounded spiked subset with base $\calB$. 
By \cite[Lemma 4.5]{MinComp2}, any function $u:\bbZ^k\to \bbZ$ admits a moderation $v$, i.e., a function $v: \bbZ^k\to \bbZ$ such that for each $x_0\in \bbZ^k$, the function 
$$x\mapsto u(x) + v(x_0-x)$$
defined on $\bbZ^k$ is bounded above.
It turns out that any (or some) $u$-bounded spiked subset with base $\calB$ admits a minimal complement in $\bbZ^{k+1}$ if and only if the base $\calB$ admits a minimal complement in $\bbZ^k$ (see \cite[Theorems 4.6, 5.6]{MinComp2}). 

More generally, for an abelian group $\calG$ as above with subgroups $G_1, G_2$ such that $G_2$ is free and the multiplication map from $$G_1\times G_2\to \calG$$ is injective, the notion of `spiked subsets' can be extended (cf. \cite[Definitions 5.1, 5.2]{MinComp2}). 

\begin{definition}
\label{Spiked subsets}
A subset $X$ of an abelian group $\calG$ is said to be a $(u, \varphi)$-\textnormal{bounded spiked subset} with respect to subgroups $G_1, G_2$ of $\calG$ if 
\begin{enumerate}
\item $G_2$ is a finitely generated free abelian group of positive rank,
\item the homomorphism $G_1\times G_2 \to G$ defined by $(g_1, g_2) \mapsto g_1g_2$ is injective,
\end{enumerate}
and there exists a function $u:G_1\to G_2$ and an isomorphism $\varphi:G_2\xra{\sim} \bbZ^{\mathrm{rk} G_2}$ such that 
$$\calB  G_2
\subseteq
X
\subseteq 
\calB G_2
\bigsqcup 
\left(
\bigsqcup_{g_1\in G_1\setminus \calB}
g_1\cdot
\left(
\varphi^\mo \left(\bbZ^{\mathrm{rk} G_2}_{<\varphi(u(g_1))}
\right)
\right)
\right)
$$
holds for some non-empty subset $\calB$ of $G_1$. The set $\calB$ is called the \textnormal{base} of $X$. 
\end{definition}
The notion of moderation extends to such a context (cf. \cite[Definition 5.3]{MinComp2}). Moreover, when $G_1$ is finitely generated, it follows that $u$ admits a $\varphi$-moderation $v$ (cf. \cite[Proposition 5.4]{MinComp2}). 
Furthermore, if $G_1$ is finitely generated, then a $(u, \varphi)$-bounded spiked subset of $\calG$ with respect to $G_1, G_2$ having base $\calB$ admits the graph of the restriction of some $\varphi$-moderation of $u$ to some subset of $G_1$ as a minimal complement in $G_1G_2$ if and only if the base $\calB$ admits a minimal complement in $G_1$ (see \cite[Theorem 5.6]{MinComp2}). 

The following result states that an appropriate formulation of Lemma \ref{Lemma:Spike}  also holds for spiked subsets, and thereby classifies all the spiked subsets which can be a part of a co-minimal pair of certain form. 

\begin{theorem}
\label{Thm:Spiked}
Let $G_1, G_2$ denote two subgroups of an abelian group $\calG$. Let $X$ be a $(u, \varphi)$-bounded spiked subset of $\calG$ with respect to $G_1, G_2$ and with base $\calB$. If $u$ admits a $\varphi$-moderation, then $X$ is a part of a co-minimal pair in $G_1G_2$ of the form $(X, M_v)$ where $M_v$ is the graph of the restriction of a moderation $v$ of $u$ to some subset $M$ of $G_1$ if and only if $X$ is equal to $\calB G_2$ and $\calB$ is a part of a co-minimal pair in $G_1$. 
\end{theorem}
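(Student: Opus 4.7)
The plan is to prove both directions by exploiting the product structure of $G_1 G_2 \cong G_1 \times G_2$ together with the $\varphi$-moderation property of $v$, which lets me control the ``spike'' portion of $X$. Throughout, I write elements of $G_1 G_2$ uniquely as pairs $(g_1, g_2)$ via the injective multiplication map.

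For the forward direction, suppose $(X, M_v)$ is co-minimal, where $M_v = \{(m, v(m)) : m \in M\}$ for some $M \subseteq G_1$ and some $\varphi$-moderation $v$ of $u$. The first and main step is to establish $\calB + M = G_1$. Fix $g_1 \in G_1$; the $\varphi$-moderation property furnishes $B_{g_1} \in G_2$ with $\varphi(u(x) + v(g_1 - x)) \leq \varphi(B_{g_1})$ coordinatewise for every $x \in G_1$. For $g_2 \in G_2$ with $\varphi(g_2)$ strictly exceeding $\varphi(B_{g_1})$ in every coordinate, consider a decomposition $(g_1, g_2) = (x_1, x_2) + (m, v(m))$ with $(x_1, x_2) \in X$ and $m \in M$. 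If $x_1 \notin \calB$, then the definition of a spiked subset forces $\varphi(x_2) < \varphi(u(x_1))$ coordinatewise, so adding $\varphi(v(m))$ gives $\varphi(g_2) < \varphi(u(x_1) + v(g_1 - x_1)) \leq \varphi(B_{g_1})$, a contradiction. Hence $x_1 \in \calB$ and $g_1 = x_1 + m \in \calB + M$. Since $\calB G_2 \subseteq X$, this immediately yields $\calB G_2 + M_v = G_1 G_2$.

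Next, any $(g_1, z) \in X$ with $g_1 \notin \calB$ would be redundant, because $X \setminus \{(g_1, z)\}$ still contains $\calB G_2$ and hence still complements $M_v$, contradicting minimality of $X$. So $X = \calB G_2$. The co-minimality of $(\calB, M)$ in $G_1$ then follows by a projection argument in both variables: if $(M \setminus \{m\}) + \calB = G_1$, the decomposition $(g_1, g_2) = (\beta, g_2 - v(m')) + (m', v(m'))$ lifts to $(M_v \setminus \{(m, v(m))\}) + \calB G_2 = G_1 G_2$, contradicting minimality of $M_v$; and if $(\calB \setminus \{\beta\}) + M = G_1$, the same kind of lift yields $(\calB G_2 \setminus \{(\beta, 0)\}) + M_v = G_1 G_2$, contradicting minimality of $\calB G_2$.

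For the converse, assume $X = \calB G_2$ and $(\calB, M)$ is co-minimal in $G_1$, and choose any $\varphi$-moderation $v$ of $u$ (which exists by hypothesis). Then $\calB G_2 + M_v = G_1 G_2$ is immediate from $\calB + M = G_1$; minimality of $M_v$ as a complement to $\calB G_2$ reduces on projection to minimality of $M$ as a complement to $\calB$; and for minimality of $\calB G_2$ as a complement to $M_v$, fix $(\beta, z) \in \calB G_2$ and use minimality of $\calB$ in $G_1$ to produce $g_1$ admitting a unique representation $g_1 = \beta + m$ with $m \in M$, whereupon $(g_1, z + v(m))$ is missed by $(\calB G_2 \setminus \{(\beta, z)\}) + M_v$. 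The principal obstacle is the very first step of the forward direction, namely extracting $\calB + M = G_1$, because this is where the moderation property must be invoked to neutralize the contributions of spike elements of $X$; once that equality is secured, the remainder of the proof is careful bookkeeping in $G_1 \times G_2$.
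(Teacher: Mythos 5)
Your proof is correct, and its overall skeleton matches the paper's: kill the spike part of $X$ to get $X=\calB G_2$, then transfer minimality back and forth between $G_1G_2$ and $G_1$ by projecting and lifting along the graph $M_v$. The genuine difference is one of self-containedness. The paper outsources the two hardest ingredients to \cite[Theorem 5.6]{MinComp2} --- namely that $M_v$ being a minimal complement to $X$ forces $M$ to be a minimal complement to $\calB$, and that $M_v$ is then a minimal complement to $\calB G_2$ --- whereas you reprove exactly this content from scratch via the moderation argument: for a fixed $g_1$, pick $g_2$ with $\varphi(g_2)$ coordinatewise above the bound $\varphi(B_{g_1})$ supplied by the $\varphi$-moderation, so that any representation $(g_1,g_2)=(x_1,x_2)+(m,v(m))$ must have $x_1\in\calB$, yielding $\calB+M=G_1$ directly from the complement property alone. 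This is the right mechanism (it is essentially what the cited theorem's proof does), and it has the virtue of making explicit a step the paper states rather tersely ("Since $M_v$ is a minimal complement to $X$, it follows that $M$ is a minimal complement to $\calB$"). The remaining steps --- redundancy of any $(g_1,z)\in X$ with $g_1\notin\calB$ against minimality of $X$, and the unique-representation lifts $(g_1,z+v(m))$ and $\{g_1\}\times G_2$ used to transfer the two minimality statements --- coincide with the paper's bookkeeping, so the cost of your route is only length, and the benefit is a proof that does not depend on the external reference.
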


\begin{proof}
Assume that there exists a $\varphi$-moderation $v$ of $u$ such that $(X, M_v)$ is co-minimal pair in $G_1G_2$ where $M_v$ denotes the graph of the restriction of $v$ of $u$ to some subset $M$ of $G_1$. Since $M_v$ is a minimal complement to $X$, it follows that $M$ is a minimal complement to $\calB$. By \cite[Theorem 5.6]{MinComp2}, $M_v$ is a minimal complement to $\calB G_2$ in $G_1G_2$. Since $(X, M_v)$ is a co-minimal pair in $G_1G_2$, we conclude that $X$ cannot be larger than $\calB G_2$. Hence $X$ is equal to $\calB G_2$. If $\calB$ is not a minimal complement to $M$, then $\calB G_2$ is not a minimal complement to $M_v$. Since $(X, M_v)$ is a co-minimal pair in $G_1G_2$, it follows that $\calB$ is a minimal complement to $M$, i.e., $\calB$ is a part of a co-minimal pair in $G_1$. 

Suppose $\calB$ is a part of a co-minimal pair $(\calB, M)$ in $G_1$. Let $v$ denote a $\varphi$-moderation of $u$. Then the graph $M_v$ of the restriction of $v$ to $M$ is a minimal complement to $\calB G_2$ by \cite[Theorem 5.6]{MinComp2}. If $\calB G_2$ were not a minimal complement to $M_v$, then the set $$(\calB G_2)\setminus\{b+t\}$$ would be a complement to $M$ for some elements $b\in \calB$ and $t \in G_2$. Since $\calB$ is a minimal complement to $M$, it follows that $M$ contains an element $m$ such that $b+m$ does not belong to $(\calB\setminus \{b\})+M$. This implies that $$((\calB G_2)\setminus\{b+t\})+M_v$$ does not contain the element $$b+m+ t+v(m)$$ of $G_1G_2$. Consequently, $\calB G_2$ is a minimal complement to $M_v$.
\end{proof}

\section{Semilinear sets, approximate subgroups and non co-minimality}\label{sec5}
We conclude the discussion on co-minimal pairs by mentioning subsets which are in a sense the other extreme of being part of a co-minimal pair, i.e., they \emph{do not belong to any minimal pair}. In other words, they are not a minimal complement to any subset and also no subset can be a minimal complement to one of these sets. For this we recall the well-defined notion of an arithmetic progression in an abelian group.

\begin{definition}[Arithmetic progressions]\label{ArithmeticProgression}
A subset $X$ of an abelian group $(G,+)$ is an \textnormal{unbounded arithmetic progression in $G$} if there exist $a \in G$ and $b\in G\setminus \{e\}$ such that
$$X = P(a,b) := \{ a + n b \, | \,n \in \bbZ_{\geq 0} \}.$$
A subset $Y$ of $G$ is a \emph{bounded arithmetic progression} if there exist $a,b \in G$ and $m \in \bbZ_{\geq 0}$ such that 
$$Y = P_m(a,b) := \{ a + n b \,|\, n \in [0,m] \cap \bbZ \}.$$
\end{definition}

More generally, we will use the following objects.

\begin{definition}[Generalised arithmetic progressions]
An infinite subset $\mathbf{X}$ of an abelian group $(G,+)$ is an \emph{unbounded generalised arithmetic progression of dimension $d$ with respect to $b_1, \cdots, b_d\in G$} if there exists an element $a \in G$ such that $\mathbf X$ is equal to 
$$\{ a + n_{1}b_{1}+\cdots + n_{d}b_{d} \, | \,n_i \text{ runs over } F_i \}$$
for some subsets $F_1, \cdots, F_d$ of $\bbZ$ where each $F_i$ is either an unbounded arithmetic progression in $\bbZ$ or a finite set. 
A subset $\mathbf{Y}$ of $G$ is a \emph{bounded generalised arithmetic progression of dimension $d$} if there exist $a,b_{1},\cdots, b_{d} \in G$ and $m \in \bbZ_{\geq 0}$ such that 
$$\mathbf{Y} =  P_{m_{1},\cdots, m_{d}}(a,b_{1},\cdots, b_{d}) := \{ a + n_{1}b_{1}+\cdots + n_{d}b_{d} | \, n_{i} \in [0,m_{i}] \cap \bbZ, 1\leqslant i\leqslant d \}.$$	
\end{definition}

A generalised arithmetic progression is also known as a linear set. A finite union of unbounded linear sets is called a semilinear set.

\begin{theorem}\label{semilinear}
Let $G$ be a free abelian group and $A$ be a non-empty subset of $G$. The following are true
\begin{enumerate}
\item If $A = P_m(a,b)$ or in general if $A= P_{m_{1},\cdots, m_{d}}(a,b_{1},\cdots, b_{d}) $, then $A$ is always a minimal complement to some subset of $G$ and also $A$ has a minimal complement.
\item If $A =  P(a,b)$, or more generally, if $A$ is an unbounded generalised arithmetic progression of dimension $d$ with respect to $b_1, \cdots, b_d\in G$ and $b_1, \cdots, b_d$ generate a free subgroup of $G$ of rank $d$, then $A$ is neither a minimal complement to any subset of $G$, nor does it have a minimal complement in $G$.
\item If $G=\mathbb{Z}^{\mathbb{I}}$, $A=  P(a,b)^{\mathbb{I}}$ where $\mathbb{I}$ is some indexing set and $ P(a,b) \subseteq \mathbb{Z}$, then $A$ is neither a minimal complement to any subset of $G$, nor does it have a minimal complement in $G$.
\end{enumerate}
Thus the subsets in (2) and (3) above, can never belong to a minimal pair.
\end{theorem}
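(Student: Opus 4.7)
Part (1) is essentially immediate from Theorem \ref{Thm:CoMinimal}: since $A = P_{m_1, \ldots, m_d}(a, b_1, \ldots, b_d)$ is finite, that theorem supplies a co-minimal pair $(A, B)$, which simultaneously witnesses that $A$ has a minimal complement ($B$) and is itself a minimal complement (to $B$). The substance lies in parts (2) and (3), where for each $A$ two assertions must be established: that $A$ is not a minimal complement to any subset, and that $A$ has no minimal complement in $G$.

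For part (2), I plan to first reparameterise so that one unbounded $F_i$ — say $F_1 = \{c_1 + n d_1 : n \geq 0\}$ — becomes $\bbZ_{\geq 0}$, by replacing $a$ with $a + c_1 b_1$ and $b_1$ with $d_1 b_1$; this preserves the freeness of $\langle b_1, \ldots, b_d\rangle$ because $d_1 \neq 0$. Then $A + b_1$ omits precisely the $n_1 = 0$ slice of $A$, so $A + b_1 \subsetneq A$, and the lemma in Section \ref{sec3} immediately yields that $A$ cannot be a minimal complement to any subset. For the second half, let $C$ be any complement of $A$ and fix $c_0 \in C$ and $g \in G$; for each $k \geq 0$ choose $\alpha_k \in A$ and $c_k \in C$ with $g - k b_1 = \alpha_k + c_k$. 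Iterating $A + b_1 \subseteq A$ gives $A + k b_1 \subseteq A$, so $g = (\alpha_k + k b_1) + c_k$ with $\alpha_k + k b_1 \in A$. If $c_k = c_0$ for every $k$, then $\alpha_k = g - c_0 - k b_1 \in A$ for every $k$; writing the unique expansion $\alpha_0 = a + m_1 b_1 + m_2 b_2 + \cdots + m_d b_d$ with $m_1 \in \bbZ_{\geq 0}$ and $m_i \in F_i$ (uniqueness is exactly the freeness of $\langle b_1, \ldots, b_d\rangle$), the same uniqueness forces $\alpha_k = a + (m_1 - k) b_1 + m_2 b_2 + \cdots + m_d b_d$, which fails to lie in $A$ once $k > m_1$. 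This contradiction supplies some $k$ with $c_k \neq c_0$, and then $g = (\alpha_k + k b_1) + c_k \in A + (C \setminus \{c_0\})$, so $C$ is not minimal.

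Part (3) runs the same blueprint over $G = \bbZ^{\bbI}$ with $\mathbf{b} := (b)_{i \in \bbI}$ in place of $b_1$: the constant function $a \in A$ shows $A + \mathbf{b} \subsetneq A$, so the Section \ref{sec3} lemma handles the first assertion; for the second, the iteration $g - k \mathbf{b} = \alpha_k + c_k$ applies verbatim, and if $c_k = c_0$ for every $k$, then at any fixed coordinate $i$ the equation $\alpha_k(i) = g(i) - c_0(i) - k b \in P(a, b)$ would force the integer $(g(i) - c_0(i) - a)/b - k$ to be a non-negative integer for every $k \geq 0$, which is absurd. The main obstacle I expect is the uniqueness step in part (2): it is precisely the freeness of $\langle b_1, \ldots, b_d\rangle$ that forces the $b_1$-coefficient of $\alpha_k$ to strictly decrease with $k$ and eventually leave $F_1 = \bbZ_{\geq 0}$; without that hypothesis, $\alpha_k$ could conceivably rearrange itself to stay inside $A$, and the whole argument would collapse.
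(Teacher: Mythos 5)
Your argument is correct, and for the substantive halves of (2) and (3) it takes a genuinely different route from the paper. Part (1) and the ``not a minimal complement'' halves of (2) and (3) coincide with the paper's (the translate $A+b_1'b_1\subsetneq A$, resp.\ $A+f\subsetneq A$; note that the lemma in Section \ref{sec3} is \emph{stated} for co-minimal pairs, but its proof gives exactly the non-minimality you need). For the ``no minimal complement'' half of (2), the paper first reduces to the subgroup $\langle b_1'b_1,\cdots,b_e'b_e,b_{e+1},\cdots,b_d\rangle\cong\bbZ^d$ via \cite[Theorem 2.3]{MinComp2}, then splits into the case $e=d$ (quoting \cite[Corollary 3.2(2)]{MinComp1} for $\bbZ_{\geq 0}^d$) and the case $e<d$, which is handled by a projection argument choosing extremal coordinates in each fibre. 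Your proof replaces all of this with a single self-contained argument: for an arbitrary complement $C$ and arbitrary $c_0\in C$, writing $g-kb_1=\alpha_k+c_k$ and using $A+kb_1\subseteq A$ shows $g\in A+\{c_k\}$ for every $k$, while the unique representation in the free subgroup $\langle b_1,\cdots,b_d\rangle$ forces $\alpha_0-kb_1$ to leave $A$ for $k>m_1$, so $c_k\neq c_0$ for some $k$. This avoids the external results and the case split, and in fact proves the stronger statement that \emph{every} element of \emph{every} complement of $A$ is removable; the same scheme then genuinely unifies (3) with (2), whereas the paper treats (3) by a separate (and terser) ad hoc argument. What the paper's route buys is the connection to its earlier structural machinery on minimal complements in subgroups; what yours buys is uniformity and elementarity. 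The one hypothesis you must not drop is the rank-$d$ freeness, which you correctly flag: it is what makes the $b_1$-coefficient of $\alpha_k$ well defined and strictly decreasing.
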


\begin{proof}
(1) Note that $A$ is a non-empty finite subset in $G$. It follows from Theorem \ref{Thm:FiniteSubsetIsAMinComp} that $A$ is a minimal complement to some subset and from Theorem \ref{finMin} that $A$ admits a minimal complement.

(2) Suppose $A$ is an unbounded generalised arithmetic progression of dimension $d$ with respect to $b_1, \cdots, b_d\in G$ and $b_1, \cdots, b_d$ generate a subgroup of $G$ of rank $d$. Then 
$$A = \{ a + n_{1}b_{1}+\cdots + n_{d}b_{d} \, | \,n_i \text{ runs over } F_i \}$$
for some element $a\in G$ and some subsets $F_1, \cdots, F_d$ of $\bbZ$ where each $F_i$ is either an unbounded arithmetic progression in $\bbZ$ or a finite set. Reordering the elements $b_1, \cdots, b_d$ (if necessary), we assume that $F_1, \cdots, F_e$ are unbounded arithmetic progressions in $\bbZ$ with $1\leq e\leq d$ and the remaining $F_i$'s are finite. Replacing $a$ by a suitable element of $G$, we can assume that the initial term of each of $F_1, \cdots, F_e$ is equal to zero. Let $b_1', \cdots, b_e'$ denote nonzero elements of $\bbZ$ such that $F_i$ is equal to $\bbZ_{\geq 0} b_i'$ for $1\leq i \leq e$. Note that $A + b_1'b_1$ is a proper subset of $A$ and hence $A$ is not a minimal complement to any subset of $G$. 

It remains to show that $A$ does not have a minimal complement in $G$. Replacing $A$ by a translate of $A$ (if necessary), we can assume $a=0$. By \cite[Theorem 2.3]{MinComp2}, it is enough to show that $A$ does not admit a minimal complement in the subgroup 
$$\langle b_1'b_1, \cdots, b_e'b_e, b_{e+1}, \cdots, b_d \rangle .$$ 
Hence, the elements, $$b_1'b_1, \cdots, b_e'b_e, b_{e+1}, \cdots, b_d$$ could be identified with $e_1, \cdots, e_d$ and $A$ could be thought of as a subset of $\bbZ^d$. If $e$ is equal to $d$, then $A$ is equal to the subset $\bbZ_{\geq 0} ^d$ of $\bbZ^d$. By \cite[Corollary 3.2(2)]{MinComp1}, the set $A$ does not admit a minimal complement in $\bbZ^d$. Suppose $e$ is less than $d$. Let $$\pi_1: \bbZ^d\to \bbZ^e \text{ and } \pi_2: \bbZ^d \to \bbZ^{d-e} $$ denote the projections onto the first $e$ coordinates and onto the last $d-e$ coordinates respectively. Suppose $F_{e+1}, \cdots, F_d$ are contained in the intervals $[p_{e+1}, q_{e+1}], \cdots, [p_d, q_d]$ respectively. Let us assume that $B\subseteq \bbZ^d$ is a minimal complement of $A$. For each $$v\in [-q_{e+1}, -p_{e+1}]\times \cdots \times [-q_d, -p_d]$$ such that the set $B\cap \pi_2^\mo(v)$ is non-empty, choose an element 
$$(x_{v1}, \cdots, x_{ve}) \in \pi_1(B \cap \pi_2^\mo (v)).$$
Since $B$ is a minimal complement to $A$, it follows that the set $\pi_1(B \cap \pi_2^\mo (v))$ does not contain any point whose $i$-coordinate is less than or equal to $x_{vi}-1$ for each $1\leq i \leq e$. Since $(\underbrace{-1, \cdots, -1}_{e\text{-times}}, \underbrace{0, \cdots, 0}_{(d-e)\text{-times}})$ belongs to $\bbZ^d = B+A$, it follows that the set $B\cap \pi_2^\mo(v)$ is non-empty for some  $$v\in [-q_{e+1}, -p_{e+1}]\times \cdots \times [-q_d, -p_d].$$ For each $1\leq i\leq e$, define 
\begin{equation*}
x_i : = 
\min_{ v\in [-q_{e+1}, -p_{e+1}]\times \cdots \times [-q_d, -p_d], B \cap \pi_2^\mo(v) \neq \emptyset} (x_{vi}-1).
\end{equation*}
Note that $(x_1, \cdots, x_e, \underbrace{0, \cdots, 0}_{(d-e)\text{-times}})$ does not belong to $B+A=\bbZ^d$. Hence $A$ does not admit any minimal complement. 

(3) Let $f\in \bbZ^{\bbI}$ denote the constant function which takes the value $b$. Then $A+f$ is a proper subset of $A$ and hence $A$ cannot be a minimal complement to some subset of $\bbZ^\bbI$. Let us assume that $b$ is positive and $a$ is equal to $0$. Suppose $A$ admits a minimal complement $B$ in $\bbZ^\bbI$. Let $\{c_i\}_{i\in \bbI}$ denote an element of $B$. Since $B$ is a complement to $A$ and $B+A$ contains $\{c_i-1\}_{i\in \bbI}$, it follows that $B$ contains an element $\{d_i\}_{i\in \bbI}$ with $d_i \leq c_i-1$ for all $i\in \bbI$. Note that $B \setminus \{\{c_i\}_{i\in \bbI}\}$ is also a complement to $A$. Hence $A$ does not admit any minimal complement in $\bbZ^\bbI$. 
\end{proof}

As a corollary, we deduce the following fact.
\begin{corollary}\label{corSemilinear}
There exist semilinear sets $A$  such that $A$ is neither a minimal complement to any subset of $G$, nor does it have a minimal complement in $G$.
\end{corollary}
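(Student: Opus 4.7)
The plan is to reduce the statement directly to Theorem \ref{semilinear}(2) by observing that a single unbounded linear set, on its own, already qualifies as a semilinear set, being a finite union consisting of just one unbounded linear set. Consequently, it suffices to exhibit a single unbounded generalised arithmetic progression satisfying the hypotheses of Theorem \ref{semilinear}(2), and the corollary then follows at once from that theorem.

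Concretely, I would take $G = \bbZ$ and $A = P(0,1) = \bbZ_{\geq 0}$, the standard unbounded arithmetic progression $\{0, 1, 2, \dots\}$. Here $b = 1$ generates a free subgroup of $\bbZ$ of rank one, so the hypotheses of Theorem \ref{semilinear}(2) are met with $d=1$. That theorem then yields that $A$ is neither a minimal complement to any subset of $\bbZ$ nor admits a minimal complement in $\bbZ$, so $A$ does not belong to any minimal pair. As a higher-dimensional variant, one may instead take $G = \bbZ^d$ and $A = \bbZ_{\geq 0}^d$, which is a single unbounded generalised arithmetic progression with respect to the standard basis vectors $e_1, \dots, e_d$; these generate a free subgroup of rank $d$, so Theorem \ref{semilinear}(2) applies equally well, providing examples of arbitrary dimension. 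If one prefers a non-finitely-generated ambient group, part (3) of Theorem \ref{semilinear} supplies an example $A = P(0,1)^{\bbI}$ in $G = \bbZ^{\bbI}$.

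There is essentially no obstacle here, since the corollary is really a naming observation about the scope of the class of examples produced by Theorem \ref{semilinear} parts (2) and (3). The only point worth double-checking is that the definition of a semilinear set admits a singleton union of unbounded linear sets, which is immediate from the definition ``a finite union of unbounded linear sets'' recalled just before the theorem.
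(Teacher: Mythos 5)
Your proposal is correct and is essentially identical to the paper's own proof, which likewise just takes $A$ to be one of the sets from part (2) of Theorem \ref{semilinear} and implicitly uses the fact that a single unbounded linear set is a (one-term) finite union and hence semilinear. Your concrete choice $A = P(0,1) = \bbZ_{\geq 0}$ and the higher-dimensional variants are valid instances, so nothing is missing.
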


\begin{proof}
Take $A$ to be one of the sets described in $(2)$ of Theorem \ref{semilinear}.
\end{proof}

The above Theorem \ref{semilinear} and Corollary \ref{corSemilinear} shed light on subtle differences in existence and inexistence of minimal complements in general abelian groups. We have seen that a proper subgroup $H$ in any group $G$ is always a minimal complement to any of its coset class and it also admits a minimal complement. However, the fact does not remain necessarily true when we pass to subsets which are close to being subgroups. Let us recall the notion of an approximate subgroup. 

\begin{definition}[$K$-approximate subgroup]\label{Appgr}
Let $G$ be a group and $K\geqslant 1$ be some parameter. A finite set $A\subseteq G $ is called a \textnormal{$K$-approximate group} if 
\begin{enumerate}
\item identity of $G$, $e\in A$,
\item it is symmetric, i.e., if $a\in A$ then $a^{-1}\in A$,
\item there is a symmetric subset $X$ lying in $A\cdot A$ with $|X|\leqslant K$ such that $A\cdot A\subseteq X\cdot A$.
\end{enumerate}
\end{definition} 

The formal definition of an approximate subgroup was introduced by Tao in \cite{Tao08}. Informally, these sort of subsets have been studied since the time of Fre{\u\i}man \cite{Fre64}. Nathanson considered a more general notion of an approximate group. For him, the set $A$ need not be finite, nor symmetric, nor contain the identity. 
\begin{definition}[$(r,l)$-approximate group \cite{Nat18}]\label{NatAppgr}
Let $r,l \in \mathbb{N}$ with $r\geqslant 2$. A non-empty subset $A\subseteq G$ is an $(r,l)$-approximate group if there exists a set $X\subseteq G$ such that $$|X|\leqslant l \text{ and } A^{r}\subseteq X\cdot A.$$
\end{definition}

Any finite approximate subgroup always belongs to a minimal pair (by {\cite[Theorem 2.1]{MinComp1}}), while inside free abelian groups they always belong to some co-minimal pair by Theorem \ref{Thm:CoMinimal}. When we pass to infinite approximate subgroups (in the sense of Nathanson), this is not necessarily the case. By $(2)$ of Theorem \ref{semilinear}, there exist unbounded linear sets which can never belong to a minimal pair while Corollary \ref{corSemilinear} concludes the same about semilinear sets. Unbounded linear sets are however examples of approximate subgroups in the sense of Definition \ref{NatAppgr}.

We mention briefly that in the same paper \cite{Nat18}, Nathanson introduced the notion of an asymptotic approximate group, which is a subset $A\subseteq G$ such that every sufficiently large power of $A$ is an $(r,l)$-approximate group.
\begin{definition}[Asymptotic $(r,l)$-approximate group]\label{NatAAppgr}
Let $r,l \in \bbN$, with $r\geqslant 2$. A subset $A$ of a group $(G,\cdot)$ is an asymptotic $(r,l)$-approximate group if there exists a threshold $h_0 \in \bbN$ such that for each natural number $h \geq h_0$, there exists a subset $X_h$ of $G$ satisfying 
$$|X_{h}|\leqslant l \text{ and } A^{hr}\subseteq X_{h}\cdot A^{h}.$$
\end{definition}

\begin{enumerate}
\item Nathanson in \cite{Nat18} showed that any finite subset in an abelian group is an asymptotic $(r,l)$-approximate group for some $r,l\in \bbN$. 
\item In \cite{BM19}, it was shown that unbounded linear sets and also semilinear sets are asymptotic $(r,l)$-approximate groups for some $r,l\in \bbN$.
\end{enumerate}
In the first case, the sets belong to co-minimal pairs while in the second case, as a consequence of Corollary \ref{corSemilinear}, they do not necessarily belong to even minimal pairs, let alone co-minimal pairs.
Thus, in general, it is also not true that asymptotic approximate groups will be part of some co-minimal pair. 

\section{Concluding remarks and further questions}\label{sec6}

The above discussion motivates one to consider co-minimal pairs with $A=B$, i.e., non-empty subsets $A$ of an abelian group $G$ with $A+A = G$ and $A+A\setminus\lbrace a\rbrace \subsetneq G, \forall \,a\in A$. These type of sets have also been considered by Kwon in the context of $G = \mathbb{Z}$ (minimal self-complements). He showed that a set $A$ has such properties if and only if $A$ avoids $3$ term arithmetic progressions and $A + A = \bbZ$, and he gave a construction of one such set in $\mathbb{Z}$. We remark that this holds in the context of an arbitrary abelian group (with the natural notion of an arithmetic progression in these groups, see Definition \ref{ArithmeticProgression}). The proof is a natural extension of the proof of Kwon for $G=\mathbb{Z}$ (see \cite[Theorem 10]{Kwon19}) and hence omitted.

\begin{proposition}\label{prop:Kwon}
Let $G$ be an abelian group. Then for a subset $A$ of $G$, $(A, A)$ is a co-minimal pair if and only if $A+A=G$ and $A$ avoids non-trivial $3$-term arithmetic progressions.
\end{proposition}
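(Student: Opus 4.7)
The plan is to unpack the self co-minimality condition and rephrase it as a uniqueness-of-representation property, which will be visibly equivalent to the 3-AP avoidance condition. Since $G$ is abelian, the left and right minimal complement conditions coincide, so $(A,A)$ being co-minimal means $A + A = G$ together with $A + (A \setminus \{a\}) \neq G$ for every $a \in A$.

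The first step is to translate that minimality. If some $g \in G$ lies outside $A + (A \setminus \{a\})$, then since $A + A = G$ we can still write $g = a_1 + a_2$ with $a_1, a_2 \in A$; the exclusion forces $a_2 = a$ in every such decomposition, and by abelian symmetry $a_1 = a$ as well. Hence $g = 2a$, and $a + a$ is the unique representation of $g$ as a sum of two elements of $A$. The converse is immediate: if $2a$ admits only this representation, then $2a \notin A + (A \setminus \{a\})$. So $(A,A)$ is co-minimal if and only if $A + A = G$ and, for every $a \in A$, the equation $a_1 + a_2 = 2a$ with $a_1, a_2 \in A$ has $(a, a)$ as its only solution.

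The second step is to interpret this uniqueness in the language of Definition \ref{ArithmeticProgression}. A solution $a_1 + a_2 = 2a$ with $a_1, a_2 \in A$ is exactly a 3-term arithmetic progression $(a_1, a, a_2)$ sitting inside $A$, with middle term $a$ and common difference $d = a - a_1 = a_2 - a$. The solution is trivial precisely when $d = 0$, so uniqueness at the middle term $a$ is equivalent to the absence of a non-trivial 3-term AP in $A$ with middle term $a$. Since every 3-term AP contained in $A$ has its middle term in $A$, demanding uniqueness at every $a \in A$ is equivalent to $A$ containing no non-trivial 3-term AP at all. Combined with step one, this yields both implications.

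The argument involves no serious obstacle; the only mild subtlety is to interpret \emph{non-trivial} so that the bijection in step two is clean even in the presence of $2$-torsion in $G$. We take non-trivial to mean $d \neq 0$, rather than the stronger condition that all three terms are distinct — this matches Definition \ref{ArithmeticProgression} (whose generator $b$ is required to be nonzero) and ensures, for instance, that a triple $(a, a+d, a)$ with $d$ of order $2$ is correctly counted as a non-trivial AP witnessing the failure of uniqueness at the middle term.
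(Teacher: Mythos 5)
Your argument is correct and is precisely the unique-representation argument the paper has in mind when it omits the proof as ``a natural extension of the proof of Kwon for $G=\mathbb{Z}$'': minimality at $a$ is equivalent to $2a$ having $(a,a)$ as its only representation in $A+A$, which in turn is equivalent to the absence of a non-trivial $3$-term AP centred at $a$. Your closing remark on $2$-torsion is a genuine and necessary point in the general abelian setting --- with the stronger convention that all three terms of the progression be distinct the statement would fail (e.g.\ $A=G=\mathbb{Z}/2\mathbb{Z}$ satisfies $A+A=G$ and contains no three distinct elements, yet $(A,A)$ is not co-minimal) --- so the convention $d\neq 0$ is the right one and worth stating explicitly.
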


However, the construction of such sets in groups other than $\mathbb{Z}$ is an issue. Further, even in $\mathbb{Z}$, one can ask about the existence of infinite co-minimal pairs, i.e., infinite sets $A,B$ with $A\neq B$, such that $(A,B)$ is a co-minimal pair. In the following, we state a list of the open questions concerning co-minimal pairs for further research.

\begin{questionIntro}\label{question1}
Do there exist infinite subsets $A,B$ of $\mathbb{Z}$ such that $(A,B)$ is a co-minimal pair and $B$ is not a translate of $A$? For instance, what about infinite co-minimal pairs with at least one of $A$ or $B$ bounded on one side? What about the case when $A$ or $B$ contains $3$-term arithmetic progressions? 
\end{questionIntro} 

\begin{questionIntro}
	What is the behaviour of minimal complement with respect to sets which are close to being a subgroup (but is not actually a subgroup)? For instance, does there exist infinite approximate subgroup or infinite asymptotic $(r,l)$ approximate subgroup in a group $G$, which can be a minimal complement to some subset? Does the answer depend on specific values of $r,l$?
\end{questionIntro}

The following question was posed by Laurent Bartholdi during a talk of the first author at IHP, Paris. 

\begin{questionIntro}\label{question3}
Given any infinite, symmetric subset in a group $G$ or even in $\mathbb{Z}^{d}$ ($d\geqslant 2$), what can we say about the existence of its minimal complement? 
\end{questionIntro}

We recall that for $d=1$, i.e., $G = \mathbb{Z}$, the existence of a minimal complement of any infinite, symmetric subset is guaranteed by a result of Chen--Yang \cite[Theorem 1]{CY12}. Questions \ref{question1} and \ref{question3} can be combined to ask:

\begin{questionIntro}
Do there exist infinite subsets $A,B\subseteq \mathbb{Z}$, which form a co-minimal pair and at least one of them is symmetric? 
\end{questionIntro}

\section{Acknowledgements}
The first author would like to acknowledge the support of the OWLF program of the MFO, Oberwolfach and would also like to thank the Fakult\"at f\"ur Mathematik, Universit\"at Wien.
The second author would like to acknowledge the Initiation Grant from the Indian Institute of Science Education and Research Bhopal, and the INSPIRE Faculty Award from the Department of Science and Technology, Government of India.


	\providecommand{\bysame}{\leavevmode\hbox to3em{\hrulefill}\thinspace}
	\providecommand{\MR}{\relax\ifhmode\unskip\space\fi MR }
	\providecommand{\MRhref}[2]{%
	\href{http://www.ams.org/mathscinet-getitem?mr=#1}{#2}
	}
	\providecommand{\href}[2]{#2}

\end{document}